\numberwithin{equation}{section}
\renewcommand\d{\partial}
\renewcommand\o{\omega}
\renewcommand\th{\theta}
\def\l{\lambda}
\def\eps{\varepsilon }
\newcommand\R{\mathbb R}
\renewcommand\L{\mathcal{L}[\phi]}
\newcommand\G{G[\phi]}
\newcommand\Ginv{G^{-1}[\phi]}
\newcommand\Gadj{G^\dagger[\phi]}
\newcommand\A{A[\phi]}
\newcommand\LA{\left\langle}
\newcommand\RA{\right\rangle}
\newcommand{\RM}{{\mathbb{R}}}
\newcommand{\CM}{{\mathbb{C}}}
\newtheorem{theorem}{Theorem}[section]
\newtheorem{corollary}[theorem]{Corollary}
\newtheorem{lemma}[theorem]{Lemma}
\newtheorem{remark}[theorem]{Remark}
\theoremstyle{definition}
\title{Modulational Instability of Viscous Fluid Conduit Periodic Waves}
\author{Mathew~A.~Johnson\thanks{Department of Mathematics, University of Kansas, 1460 Jayhawk Boulevard, 
Lawrence, KS 66045; matjohn@ku.edu}\quad\&\quad Wesley R. Perkins\thanks{Department of Mathematics, University of Kansas, 1460 Jayhawk Boulevard, 
Lawrence, KS 66045; wesley.perkins@ku.edu} }
\date{\today}
\begin{document}

 \maketitle

%%%%%%%%%%%%%%%%%%%%%%%%%%%%%%%%%%%%%%%%%%%%%%%%%
%%%%%%%%%%%%%%%%%%%%%%%%%%%%%%%%%%%%%%%%%%%%%%%%%
\begin{abstract}
In this paper, we are interested in studying the modulational dynamics of interfacial waves rising buoyantly along a conduit of a viscous liquid.
Formally, the behavior of modulated periodic waves on large space and time scales may be described through the use of Whitham modulation theory.
The application of Whitham theory, however, is 
based on formal asymptotic (WKB) methods, thus removing a layer of rigor that would otherwise support their predictions. In this study, 
we aim at rigorously verifying the predictions of the Whitham theory, as it pertains to the modulational stability of periodic waves, in the context of the so-called conduit equation, a 
nonlinear dispersive PDE governing the evolution of the circular interface separating a light, viscous fluid rising buoyantly through a heavy, 
more viscous, miscible fluid at small Reynolds numbers.  In particular, using rigorous spectral perturbation theory, we connect the predictions 
of Whitham theory to the rigorous spectral (in particular, modulational) stability of the underlying wave trains.  
This makes rigorous recent formal results on the conduit equation obtained by Maiden and Hoefer.
\end{abstract}

%%%%%%%%%%%%%%%%%%%%%%%%%%%%%%%%%%%%%%%%%%%%%%%%%
%%%%%%%%%%%%%%%%%%%%%%%%%%%%%%%%%%%%%%%%%%%%%%%%%
\section{Introduction}  \label{S:intro}
%%%%%%%%%%%%%%%%%%%%%%%%%%%%%%%%%%%%%%%%%%%%%%%%%
%%%%%%%%%%%%%%%%%%%%%%%%%%%%%%%%%%%%%%%%%%%%%%%%%

In this paper, we consider the modulation of periodic traveling wave solutions to the conduit equation
\begin{equation}\label{e:conduit1}
u_t + (u^2)_x - (u^2(u^{-1}u_t)_x)_x = 0,
\end{equation}
which was derived in \cite{OC86} to model the evolution of a circular interface separating a light, viscous fluid rising buoyantly through a heavy, more viscous, miscible
fluid at small Reynolds numbers.  In \eqref{e:conduit1}, $u=u(x,t)$ denotes a nondimensional cross-sectional area of the interface at nondimensional vertical coordinate
$x$ and nondimensional time $t$: see Figure \ref{f:Schematic}(a).  The conduit equation \eqref{e:conduit1} has also been studied in the geological context, where it is known to describe,
under appropriate assumptions, the vertical transport of molten rock up a viscously deformable pipe (for example, narrow conduits and dykes) in the earth's crust.  
In that context, \eqref{e:conduit1} is a special case of the more general ``magma" equations \cite{SS84,SSW86}
\begin{equation}\label{e:magma}
u_t + (u^n)_x - (u^n(u^{-m}u_t)_x)_x = 0:
\end{equation}
where here  the parameters $n$ and $m$ correspond to permeability of the rock and the bulk viscosity, respectively.  The physical regime for these exponents 
is $2\leq n\leq 5$ and $0\leq m\leq 1$: see \cite{SS84}.  Clearly, the conduit equation corresponds to \eqref{e:magma} with
$(n,m)=(2,1)$.    %In this context, the conduit equation \eqref{e:conduit1} models magma migration of molten rock flowing up a viscously deformable pipe \cite{SSW86}.

In contrast to magma, however, viscous fluid conduits are easily accessible in a laboratory setting: see, for example, \cite{MLASH16} and references therein.  Consequently,
there has been quite a bit of study recently into the dynamics of solutions of the conduit equation \eqref{e:conduit1} and their comparison to laboratory experiments.
As described in \cite{MH16}, early experimental studies of viscous fluid conduits concentrated primarily on the formation of the conduit itself 
via the continuous injection of an intrusive viscous fluid into an exterior, miscible, much more viscous fluid\cite{OC86}.  
Since then, a considerable amount of effort has been spent studying the dynamics and stability of solitary waves, as well as soliton-soliton interactions \cite{OC86,HW90,SW08,LHE14}.
More recently, it has been  observed experimentally that the competition of dispersive effects due to buoyancy  
and the nonlinear self-steepening effects of the surrounding media may result in the formation
of dispersive shock waves (DSWs): see, for example, \cite{MLASH16}.  As described there, by adjusting the injection rate of the intrusive viscous fluid appropriately 
it was found that interfacial wave oscillations form behind a sharp, soliton-like leading edge, with the wider regions moving faster than narrower regions: see Figure \ref{f:Schematic}(b).  
Such patterns correspond to dispersively regularized shock waves and have been the subject of much recent study due to their experimental realization \cite{SSW86,AH09,WH90}.
Consequently, spatially modulated oscillations seem to form a fundamental building block
regarding the long-time dynamics of the physical experiment.  It is thus reasonable to expect that any reasonable mathematical model describing these physical experiments
should admit oscillatory wave forms that are persistent (i.e. stable) when subject to slow wave modulations.  
Motivated by these observations, in this paper we aim at studying the rigorous modulational, i.e. side-band, stability
of periodic traveling wave forms in the conduit equation \eqref{e:conduit1}.

\begin{figure}
\begin{center}
(a) \hspace{-2em}\includegraphics[scale=0.45]{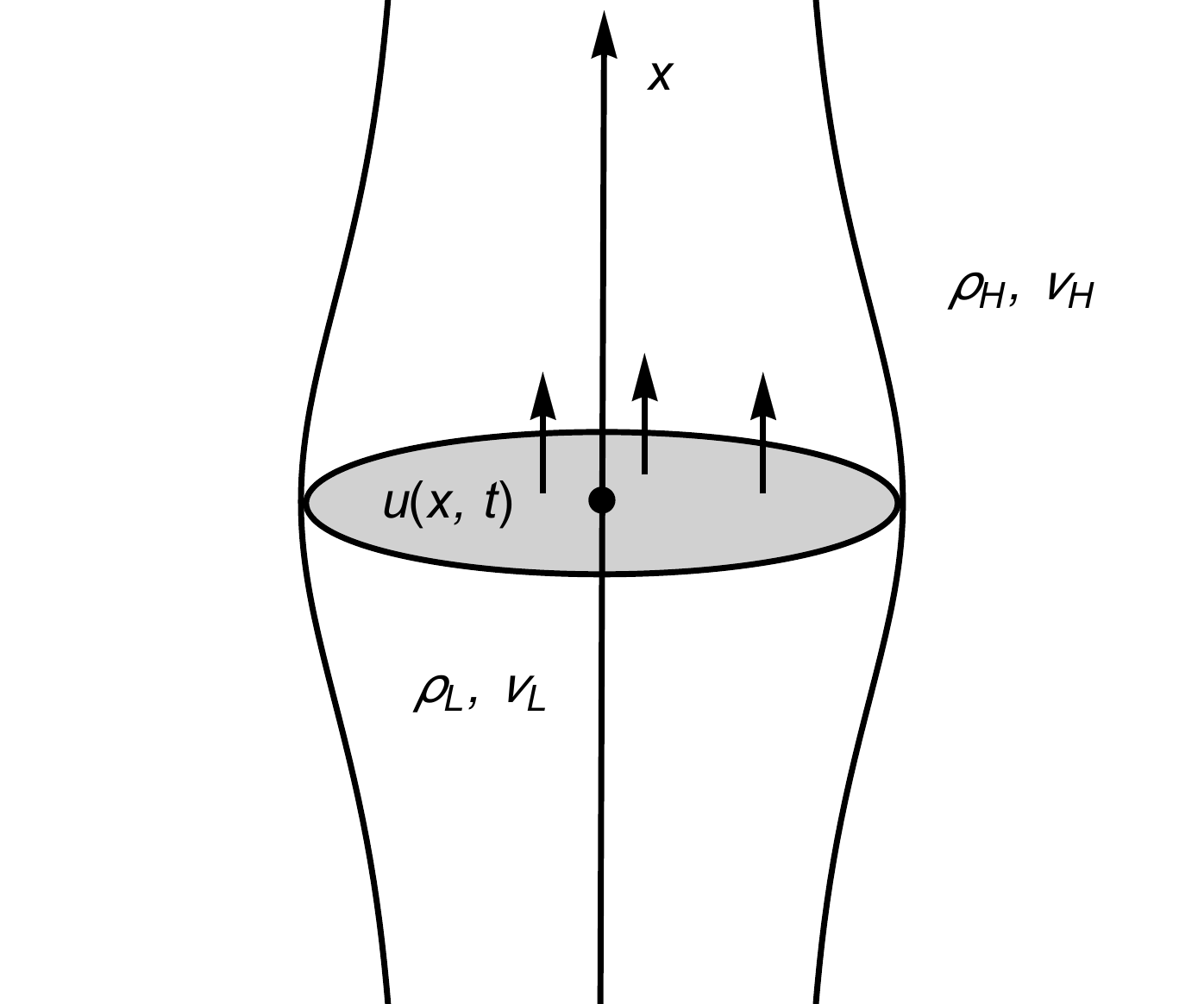}~~(b) \includegraphics[scale=0.9]{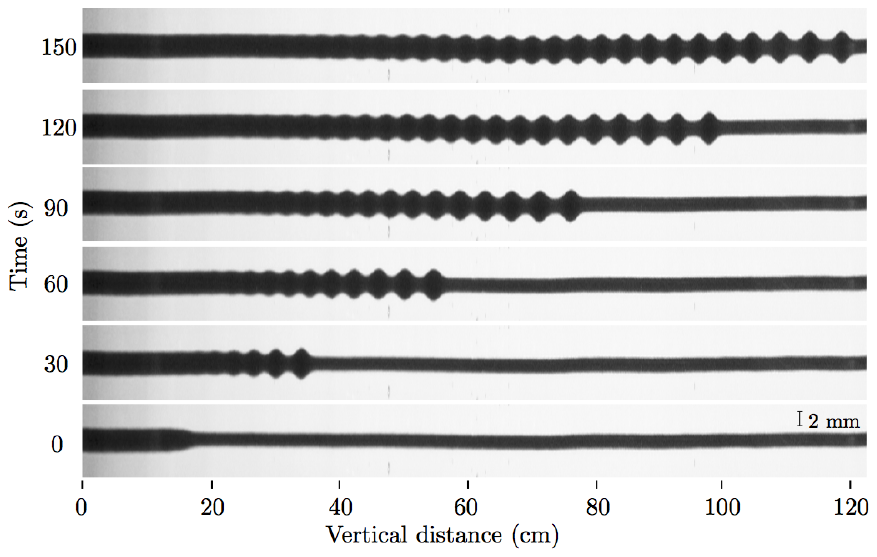}
\caption{(a) A schematic drawing for the conduit equation.  In the physical system, denoting the densities and viscosities of the heavy (outer) and light (inner) 
fluids as $\rho_H,\nu_H$ and $\rho_L,\nu_L$, respectively, the conduit equation holds under the assumption that $\rho_H>\rho_L$ and $\nu_H\gg\nu_L$.
The arrows represent rising due to buoyancy.  
(b) The formation of periodic wave trains propagating in a physical experiment.  Reprinted figure with permission from \cite{MLASH16} Copyright (2019) by the American Physical Society.
%(Picture reprinted from \cite{MLASH16} with permission from the American Physical Society.)
}\label{f:Schematic}
\end{center}
\end{figure}

While the stability and dynamics of solitary waves of the magma and conduit equations has been studied extensively, as described above, a rigorous
analysis of the local dynamics of periodic traveling waves seems lacking.  Note this problem is complicated by the fact that while these equations are dispersive,
they generally lack a Hamiltonian structure\footnote{We note the magma equations \eqref{e:magma} admit a Hamiltonian formulation only when $n+m=0$, which
is outside the parameter regime relevant to magma dynamics.  See \cite{SWR08} for a nonlinear stability analysis in this seemingly nonphysical case.}.
Most existing analyses seem to appeal to Whitham's theory of wave modulations: see,
for example, \cite{EKK94,LH13,MH16,MS12}.  This theory proceeds by rewriting the governing PDE in slow coordinates $(X,S)=(\eps x,\eps t)$  then uses a 
multiple scale (WKB) approximation of the solution and seeks a homogenized system, known as the Whitham modulation equations, 
describing the mean behavior of the resulting approximation.
This approach is widely used to describe the behavior of modulated periodic waves on large space and time scales, 
and in particular, is expected to predict the stability of periodic wavetrains to slow modulations.
Specifically, hyperbolicity (i.e. local well-posedness) of the Whitham system about a periodic solution $\phi$ of the governing PDE is expected to 
be a necessary condition for the stability to slow modulations of $\phi$: see, for example, \cite{W74}.

Whitham modulation theory has recently been applied to the conduit equation \eqref{e:conduit1}, where the authors,
by coupling their analysis to numerical time evolution studies and numerical analysis of the Whitham system,
identify an amplitude-dependent region of parameter space where such periodic wave trains are expected to be stable
to slow modulations.  However, we note the formal asymptotic methods used in Whitham theory are not, in general rigorously justified, thus removing a layer of rigor that would
otherwise support their predictions.  
The primary goal of this paper is to (rigorously) connect the predictions from Whitham modulation theory to the rigorous dynamical stability of the underlying
periodic wave train solutions of the conduit equation \eqref{e:conduit1}.  Specifically, our main result, Theorem \ref{T:main}, establishes that hyperbolicity of the Whitham modulation
equations about a periodic wave $\phi$ of \eqref{e:conduit1} is indeed a necessary condition for the stability of $\phi$ to slow modulations. This will be accomplished
in Section \ref{S:rigorous} by performing a rigorous analysis of the spectrum of the linearization of \eqref{e:conduit1} about such periodic traveling waves $\phi$.  Specifically,
using Floquet-Bloch theory and spectral perturbation theory we show that the spectrum near the origin  of the linearization of \eqref{e:conduit1} about $\phi$
consists of three $C^1$ curves which, locally, satisfy
\[
\lambda_j(\xi)=i\alpha_j\xi+o(\xi)
\]
where the $\alpha_j$ are precisely the characteristic speeds associated with the Whitham modulation equations about $\phi$.  Consequently, a necessary condition for
spectral stability of $\phi$ is that all the $\alpha_j$ are real, which is equivalent to the associated Whitham system being weakly hyperbolic at $\phi$.  Such a rigorous
connection between the stability of periodic waves and the Whitham modulation equations has been established previously in a number of contexts: see,
for example, \cite{BrJ10,BrJZ,BrHJ16,BNR14,Serre05} and references therein.  The specific approach here follows more closely the analysis in \cite{BNR14}, which
is based off the work of Serre in \cite{Serre05}.  

\

The organization of the paper is as follows.  In Section \ref{S:basic} we begin by recalling some basic facts about the conduit equation \eqref{e:conduit1}.  Specifically,
we discuss the conservation laws associated to \eqref{e:conduit1} as well as the existence analysis for periodic traveling wave solutions.  In Section \ref{S:whithamderivation} we
derive the Whitham moduation equations associated with \eqref{e:conduit1} and state our main result Theorem \ref{T:main}.  
We begin the proof of Theorem \ref{T:main} in Section \ref{S:rigorous}, where we perform a rigorous spectral stability calculation using spectral perturbation theory. Specifically,
there we derive a $3\times 3$ matrix which rigorously encodes the spectrum of the linearized operator associated with \eqref{e:conduit1} about a periodic traveling wave near the origin
in the spectral plane.  The proof of our main result, providing a rigorous connection between the Whitham modulation equations and the rigorous spectral analysis in Section \ref{S:rigorous}, 
is then given in  Section \ref{S:comparison}.  Finally, we end by analyzing our results for waves with asymptotically small oscillations in Section \ref{S:small}.

\

\noindent
{\bf Acknowledgments:} The authors would like to thank Mark Hoefer for several helpful orienting discussions regarding the dynamics
of viscous fluid conduits.  The work of both authors was partially supported by the NSF under grant DMS-1614895.

\section{Basic Properties of the Conduit Equation}\label{S:basic}

In this section, we collect some important basic facts about the conduit equation \eqref{e:conduit1}. 

\subsection{Conservation Laws \& Conserved Quantities} 

First, we note that it is shown in \cite[Corollary 5.7]{SSW07} that the conduit equation is globally well-posed for initial data $u(x,0)-1\in H^1(\RM)$, so long as $u(x,0)$ 
satisfies the physically reasonable requirement of being bounded away from zero.
Further, even though the conduit equation admits nearly elastic solitonic collisions, it is shown through the failure of the Painlev\'e test to not be completely integrable \cite{HC06}.
Nevertheless, \eqref{e:conduit1} admits (at least) the following two conservation laws:
\begin{equation}\label{e:conservation}
\left\{\begin{aligned}
	&u_t + (u^2-u^2(u^{-1}u_t)_x)_x=0\\
	&(u^{-1}+u^{-2}(u_x)^2)_t + (u^{-1}u_{xt} - u^{-2}u_x u_t - 2\ln|u|)_x =0
	\end{aligned}\right.
\end{equation}
Notice  \eqref{e:conservation}(i) is simply a restatement of \eqref{e:conduit1}, showing that the conduit equation itself corresponds to 
conservation of mass.  The existence of more conservation laws for the general magma equations \eqref{e:magma} was studied by Harris in \cite{H96}.
There it was shown that \eqref{e:magma} generally only admits two conservation laws\footnote{ One conservation law is always the 
magma equation \eqref{e:magma} itself, while the structure for the other law varies depending on the parameters $(n,m)$.}.  %Alternatively, we could just say they are analogues of \eqref{e:conservation}.}.
However, this analysis was shown to be inconclusive in a couple of cases, one of which occurs when $m=1$, $n\neq 0$, which 
the conduit equation \eqref{e:conduit1} clearly falls into\footnote{The other case is given by $m = n + 1$, $n\neq 0$, where a third conservation is shown to exist.}.  
Consequently, it seems to be currently unknown if \eqref{e:conduit1} admits more conservation laws or not, though Harris seems to think 
the existence of additional conservation laws unlikely.  

Restricting to solutions that are $T$-periodic in the spatial variable $x$, the conservation laws \eqref{e:conservation} give rise to the following two conserved quantities:
\begin{equation}\label{e:mass}
M(u) := \int_0^T u\, dx,~~~Q(u) := \int_0^T \frac{u + (u_x)^2}{u^2}\, dx.
\end{equation}
As we will see, these conserved quantities will play an important part in our forthcoming analysis.  
Note that the conserved quantity $M$ corresponds to conservation of mass, while the conservation of $Q$ does not seem to have a clear physical meaning \cite{LH13}.

\subsection{Existence of Periodic Traveling Waves}\label{S:exist}

Traveling wave solutions of \eqref{e:conduit1} correspond to solutions of the form $u(x,t) = \phi(x-ct)$ for some wave profile $\phi(\cdot)$
and wave speed $c>0$.  The profile $\phi(z)$ is readily seen to be a stationary solution of the evolutionary equation
\begin{equation}\label{e:travel_conduit}
u_t-cu_z+(u^2)_z-(u^2(u^{-1}(u_t-cu_z))_z)_z=0,
\end{equation}
written here in the traveling coordinate $z=x-ct$.  
After a single integration, stationary solutions of \eqref{e:travel_conduit} are seen to satisfy the second-order ODE
\begin{equation}\label{e:profile}
-2cE = -c\phi + \phi^2 + c\phi^2(\phi^{-1}\phi')',
\end{equation}
where here $'$ denotes differentiation with respect to $z$  and $E\in\RM$ is a constant
of integration.  Multiplying \eqref{e:profile} by $\phi^{-3}\phi'$,
the profile equation can be rewritten\footnote{Alternatively, one can use the identity $2(\phi^{-1}\phi')' = \phi(\phi')^{-1}\left(\phi^{-2}(\phi')^2\right)'$.} as
\[
-2cE\phi^{-3}\phi ' = -c\phi^{-2} \phi ' + \phi^{-1} \phi ' + \tfrac{c}{2}\left[\phi^{-2}(\phi')^2\right]'
\]
and hence may be reduced by quadrature to
\begin{equation}\label{e:quad}
\tfrac{1}{2}(\phi')^2 = E - \left(\tfrac{1}{c}\phi^2\ln|\phi| + a\phi^2 + \phi\right),%=: E - V(\phi; a,c),
\end{equation}
where here $a\in\RM$ is a second constant of integration.  By standard phase plane analysis, the existence and non-existence of bounded solutions of \eqref{e:profile}
is determined entirely by the effective potential 
\[
V(\phi;a,c):=\tfrac{1}{c}\phi^2\ln|\phi| + a\phi^2 + \phi.
\]
Indeed, for a given $a,c\in\RM$, a necessary and sufficient condition for the existence of periodic solutions of \eqref{e:quad} is that $V(\cdot;a,c)$ has a local minima.
Furthermore, since (in the physical modeling) the dependent variable $\phi$ represents the cross-sectional area of the viscous fluid conduit, we additionally
require that the local minima occur for $\phi>0$.  

To characterize the parameters $(a,c)$ for which $V(\cdot;a,c)$ has a strictly positive local minima, we study the
critical points of $V(\cdot;a,c)$.  %To this end, we begin by studying the critical points of $V(\cdot;a,c)$, which will be accomplished by understanding the graph of $V_\phi$ (specifically for $\phi > 0$). 
Noting that
\[
V_\phi(\phi;a,c)=\tfrac{2}{c}\phi \ln|\phi| + \tfrac{1}{c}\phi + 2a\phi + 1,\quad %V_{\phi \phi}(\phi;a,c) = \tfrac{2}{c}\ln|\phi| + \tfrac{3}{c} + 2a
V_{\phi \phi}(\phi;a,c) = \tfrac{2}{c}\left(\ln|\phi| + \tfrac{3}{2} + ac  \right)
\]
and
\[
V_{\phi\phi\phi}(\phi;a,c)=\frac{2}{c\phi},
\]
we see that, since $c>0$, the derivative $V_\phi(\cdot;a,c)$ has a local maximum at $\phi_-:=-e^{-(ac+3/2)}$ and a local minimum at 
$\phi_+:=e^{-(ac+3/2)}$.  Since $V_{\phi}(\cdot;a,c)$ additionally satisfies
\[
\lim_{\phi\to0^+}V_{\phi}(\phi;a,c)=1, \quad V_\phi(\phi_+;a,c)=1-\tfrac{2}{c}e^{-(ac+3/2)}<1, \quad {\rm and}~~~\lim_{\phi\to+\infty}V_{\phi}(\phi;a,c)=+\infty
\]
for all $a\in\R$, $c>0$, it follows that the number of positive roots of $V_\phi(\cdot;a,c)$ for $c>0$ is determined by the sign of the quantity $a-\zeta(c)$, where here
\[
\zeta(c) := \frac{1}{c}\ln\left(\frac{2}{c}\right) - \frac{3}{2c}. % = \frac{\ln(2e^{-\frac{3}{2}}) - \ln(c)}{c}:
\]
See Figure \ref{f:Bifurcation}(a) for a plot of $a=\zeta(c)$. 
Indeed, if $a>\zeta(c)$, then $V_\phi(\phi_+;a,c)$ is positive and hence $V_\phi(\cdot;a,c)$ has no positive roots, while $a<\zeta(c)$ implies
$V_\phi(\phi_+;a,c)$ is negative and hence $V_\phi(\cdot;a,c)$ has exactly two positive roots $0<\phi_1<\phi_2$.  In the latter case, it is clear from
the above analysis that $\phi_1$ and $\phi_2$ are local maxima and minima, respectively, of the effective potential\footnote{In the 
border case $a = \zeta(c)$, it follows that $V_{\phi}(\phi_+;a,c) = 0$.  Using the above analysis, we may then conclude $\phi_+$ is a saddle 
point of the effective potential $V(\cdot;a,c)$.} $V(\cdot;a,c)$: see Figure \ref{f:Bifurcation}(b).

\begin{figure}[t]
\begin{center}
(a)~\includegraphics[scale=0.55]{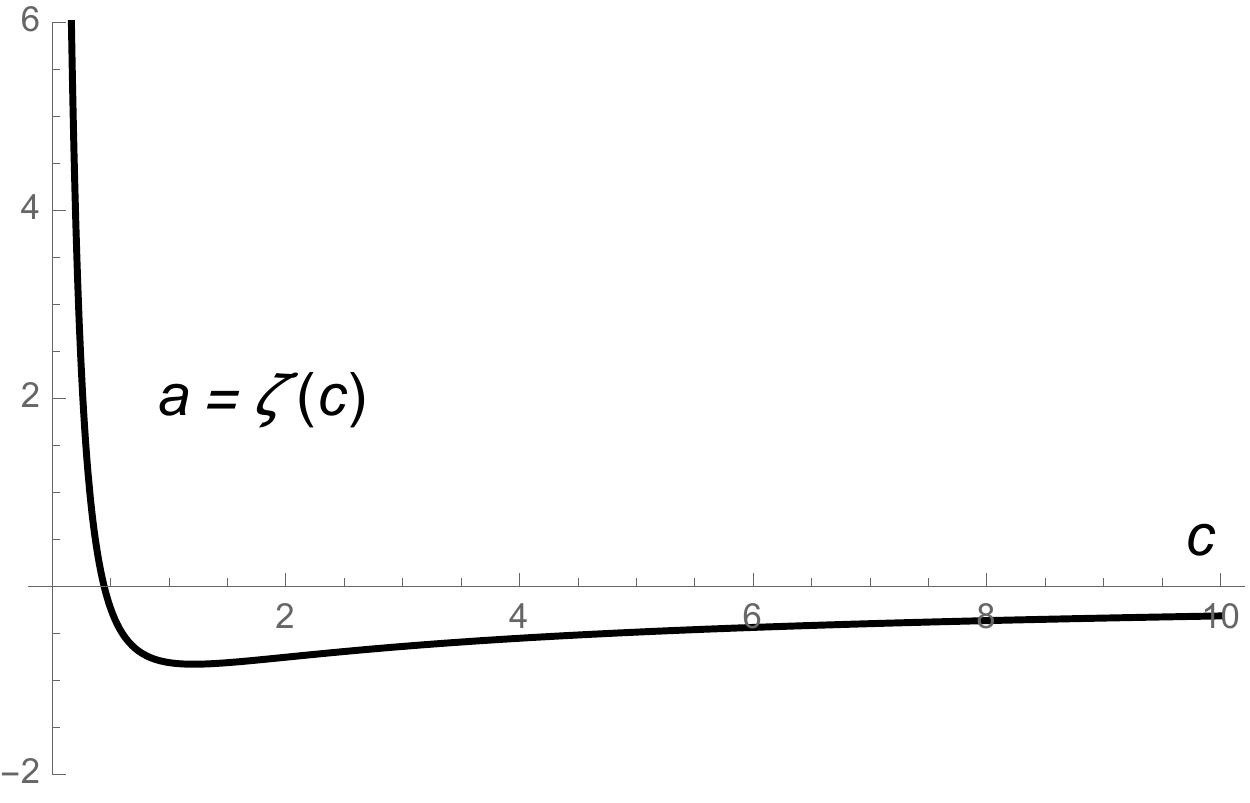}~~(b)\includegraphics[scale=0.55]{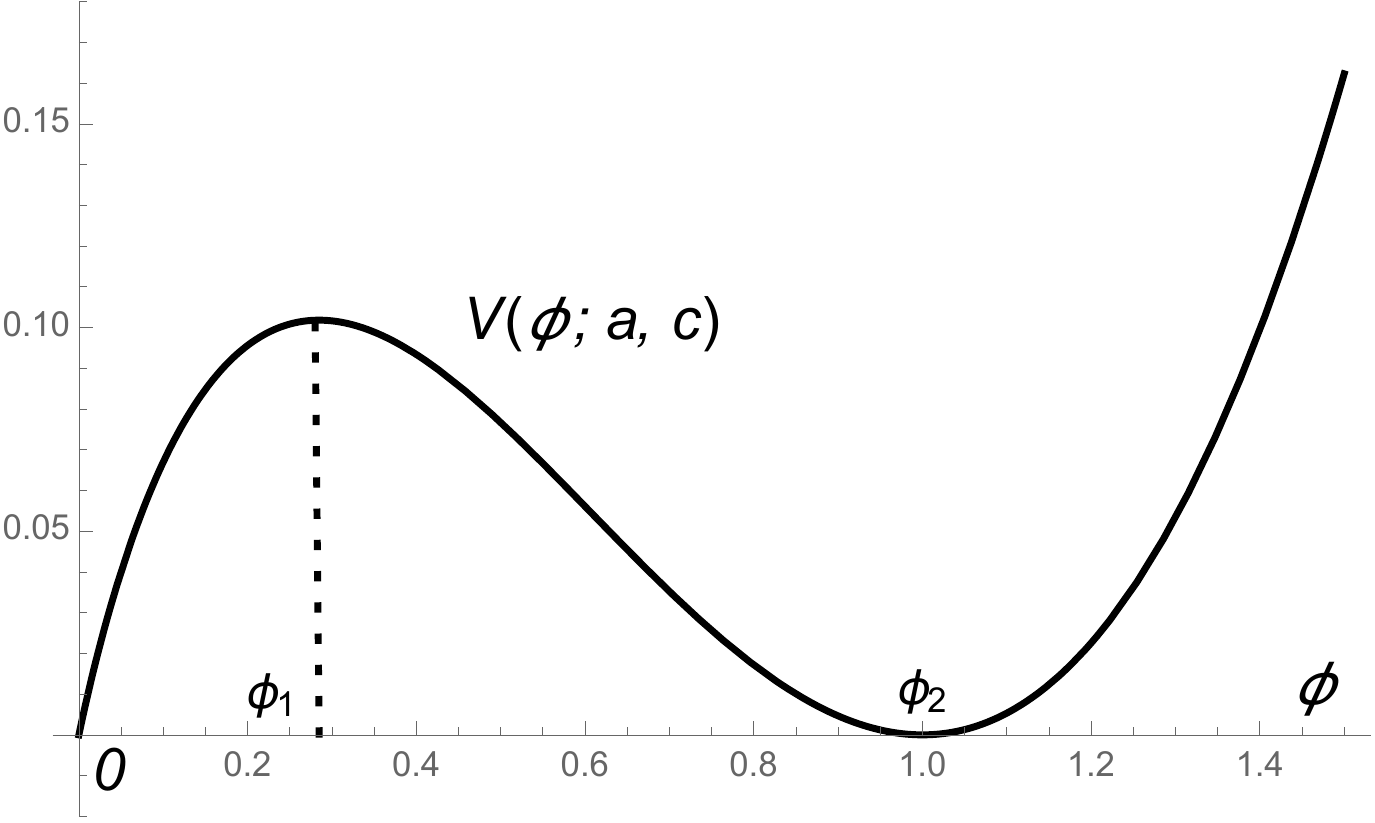}
\end{center}
\caption{(a) A plot of $\zeta(c)$ for $c>0$.  Strictly positive periodic traveling wave solutions of \eqref{e:profile} exist for $a<\zeta(c)$.  
(b)  A plot of the effective potential $V(\phi;a,c)$ for $c=1$ and $a=-1<\zeta(1)$.}%{x,0,10},{y,-2,3}
\label{f:Bifurcation}
\end{figure}

\begin{remark}
We collect here some easily verifiable properties of the function $\zeta(c)$.  First, $\zeta(c)$ only has one root, which occurs at $c=2e^{-\frac{3}{2}}$,
and one critical point (an absolute minimum), which occurs at $c=2e^{-1/2}$.  Furthermore, $\lim_{c\to 0^+}\zeta(c) = +\infty$ and $\lim_{c\to+\infty}\zeta(c) = 0$.
See Figure \ref{f:Bifurcation}(a) for a numerical plot.
\end{remark}

Returning to \eqref{e:quad}, it follows that if we define the set
\[
\mathcal{B}:=\left\{(a,E,c)\in\RM^3:c>0,~~a<\zeta(c),~~E\in(V(\phi_2(a,c);a,c),V(\phi_1(a,c);a,c))\right\},
\]
%a given $c>0$ and $a<\zeta(c)$, 
then for each $(a,E,c)\in\mathcal{B}$ the profile equation \eqref{e:profile} admits a one-parameter family, parameterized by translation invariance, of strictly positive periodic 
solutions $\phi(x;a,E,c)$ with period
\[
T=T(a,E,c)=\sqrt{2}\int_{\phi_{\rm min}}^{\phi_{\rm max}}\frac{d\phi}{\sqrt{E-V(\phi;a,c)}}=\frac{\sqrt{2}}{2}\oint_\gamma\frac{d\phi}{\sqrt{E-V(\phi;a,c)}},
\]
where here $\phi_{\rm min}\in(\phi_1,\phi_2)$ and $\phi_{\rm max}\in(\phi_2,\infty)$ are roots of $E-V(\cdot;a,c)$
corresponding to the minimum and maximum values of the profile $\phi$, respectively, 
and integration over $\gamma$ represents a complete integration from $\phi_{\rm min}$ to $\phi_{\rm max}$, and then back to $\phi_{\rm min}$ again.
Naturally, one must appropriately choose the branch of the square root in each direction.  Alternatively, one could interpret the contour $\gamma$ as a closed (Jordan) curve
in the complex plane $\CM$ that encloses a bounded set containing both $\phi_{\rm min}$ and $\phi_{\rm max}$.  Since the values $\phi_{\rm min/max}$ are smooth functions
of the traveling wave parameters $(a,E,c)$, a standard procedure shows the above integrals may be regularized at the square root branch points and hence represent $C^1$ functions
of $a$, $E$, and $c$.   In this way, we have proven the existence of a $4$-parameter family (in fact, a $C^1$ manifold) of periodic traveling wave solutions of \eqref{e:profile}:
\[
\phi(x-ct+x_0;a,E,c),~~x_0\in\RM,~~(a,E,c)\in\mathcal{B}
\]
with period $T=T(a,E,c)$.  Notice as $E\searrow V(\phi_2;a,c)$ the profile $\phi(\cdot;a,E,c)$ converges to a constant solution, while $T(a,E,c)\to+\infty$ 
as $E\nearrow V(\phi_1,a,c)$ which corresponds to a solitary wave limit.  Without loss of generality, we may choose $x_0$ such that $\phi$ is an even function, 
which we will do throughout the rest of the paper.

\begin{remark}\label{r:solitary_exist}
In \cite{SW08}, the authors study the existence and nonlinear stability of traveling solitary waves of the more general class of magma equations \eqref{e:magma}.
In the context of the conduit equation \eqref{e:conduit1}, the authors considered solitary waves satisfying $\phi \to 1$ as $|x|\to\infty$, which they show to exist
for all $c>2$.  Taking $|x|\to\infty$ in \eqref{e:profile} and requiring that $\phi=1$ be a local maximum of $V(\cdot;a,c)$, 
we see  these waves correspond to the choice $2cE = c-1$, $a=E-1$, and $c>2$.  See also Remark \ref{r:solitary_lin} below.
\end{remark}

By a similar procedure as above, the conserved quantities $M$ and $Q$ defined in \eqref{e:mass} can be restricted to the manifold of periodic traveling wave solutions
of \eqref{e:conduit1}.  Indeed, given a $T$-periodic traveling wave $\phi(\cdot;a,E,c)$ of \eqref{e:travel_conduit}, we can define the functions
$M,Q:\mathcal{B}\to\RM$ via
\begin{equation}\label{e:MQ}
\left\{\begin{aligned}
M(a,E,c)& := \int_0^{T(a,E,c)} \phi(z;a,E,c)\, dz=\frac{\sqrt{2}}{2}\oint_\gamma\frac{\phi d\phi}{\sqrt{E-V(\phi;a,c)}}\\
Q(a,E,c) &:= \int_0^{T(a,E,c)} \frac{\phi(z;a,E,c) + (\phi '(z;a,E,c))^2}{(\phi(z;a,E,c))^2}\, dz=\frac{\sqrt{2}}{2}\oint_\gamma\frac{\left(\phi+2(E-V(\phi;a,c))\right)d\phi}{\phi^2\sqrt{E-V(\phi;a,c)}}
\end{aligned}\right.
\end{equation}
where the contour integral over $\gamma$ is defined as before.  Following previous arguments, the above integrals can be regularized near their square root singularities and hence
represent $C^1$ functions on $\mathcal{B}$.  
As we will see, the gradients of these conserved quantities along the manifold of periodic traveling wave solutions of 
\eqref{e:conduit1} will play an important role in our analysis.

%%%%%%%%%%%%%%%%%%%%%%%%%%%%%%%%%%%%%%%%%%%%%%%%%%%
%%%%%%%%%%%%%%%%%%%%%%%%%%%%%%%%%%%%%%%%%%%%%%%%%%%
\section{The Whitham Modulation Equations}\label{S:whithamderivation}
%%%%%%%%%%%%%%%%%%%%%%%%%%%%%%%%%%%%%%%%%%%%%%%%%%%
%%%%%%%%%%%%%%%%%%%%%%%%%%%%%%%%%%%%%%%%%%%%%%%%%%%

In this section, we begin our study of the long-time dynamics of an arbitrary amplitude, slowly modulated periodic
traveling wave solution of the conduit equation \eqref{e:conduit1}.
An often used, yet completely formal, approach to study the dynamics of  such slowly modulated periodic waves  is to analyze the associated Whitham
modulation equations \cite{W74}.  While Whitham originally formulated this approach in terms of averaged conservation
laws\cite{W65}, it was later shown to be equivalent to an asymptotic reduction derived through formal multiple-scales (WKB) expansions \cite{L66}. 
For completeness, we recall the derivation in the context of the conduit equation \eqref{e:conduit1}: see also the description in \cite[Appendix C]{MH16}.

To provide an asymptotic description of the slow modulation of periodic traveling wave solutions of \eqref{e:conduit1}, we separate both space and time
into separate fast and slow scales.  For $\eps>0$ sufficiently small, we introduce the ``slow" variables $(X,S):=(\eps x,\eps t)$ and note that, in the slow coordinates,
\eqref{e:conduit1} can be written as
\begin{equation}
\eps u_S + 2\eps u u_X - \eps^3 u u_{XXS} + \eps^3 u_{XX} u_S = 0\label{e:conduitslow}
\end{equation}
Following Whitham \cite{W65,W74}, we seek solutions of \eqref{e:conduitslow} of the form
\[
u(X,S;\eps) = u^0 \left(X,S,\tfrac{1}{\eps}\psi(X,S)\right) + \eps u^1 \left(X,S,\tfrac{1}{\eps}\psi(X,S)\right) + O(\eps^2)
\]
where here the phase $\psi$ is chosen to ensure that the functions $u^j$ are $1$-periodic functions of the third coordinate $\theta=\psi(X,S)/\eps$.
Substituting this ansatz into \eqref{e:conduitslow} yields a hierarchy  of equations in algebraic orders of $\eps$ that must all be simultaneously satisfied.  
At the lowest order in $\eps$, which here corresponds to $\mathcal{O}(1)$, we find the relation
\begin{equation}\label{e:w1}
\psi_S u_\th^0 + \psi_X ((u^0)^2)_\th - \psi_X^2\psi_S ((u^0)^2((u^0)^{-1}u^0_\th)_\th)_\th = 0.
\end{equation}
After the identification $k := \psi_X$ and $\o := -\psi_S$ as the spatial and temporal frequencies of the modulation, respectively, and 
$c := \frac{\o}{k}$ as the wave speed, \eqref{e:w1} is recognized, up to a global factor of $-k$, as the derivative with respect to the ``fast'' variable $\theta$ of the nonlinear profile equation \eqref{e:profile}, 
rescaled for $1$-periodic functions.  
Note that, here, $k$, $\omega$ and $c$ are now functions of the slow variables $X$ and $S$.  Consequently, for a fixed $X$ and $S$ we may choose 
$u^0$ to be a periodic traveling wave solution of \eqref{e:conduit1}, and hence of the form
\[
u^0(\theta,X,S)=\phi(\theta,a(X,S), E(X,S), c(X,S))
\]
for some even solution $\phi$ of \eqref{e:profile} with $(a(X,S),E(X,S),c(X,S))\in\mathcal{B}$.  Notice the consistency condition $(\psi_X)_S=(\psi_S)_X$ implies the 
local wave number $k$ and wave speed $c$ must slowly evolve according to the relation
\begin{equation}\label{e:nonlineardispersion}
k_S+(k c)_X=0,
\end{equation}
which is sometimes referred to as ``conservation of waves".
Note that \eqref{e:nonlineardispersion} effectively serves as a nonlinear dispersion relation.  Indeed, in the case of linear waves one would have $\psi(X,S)=k(X-cS)$, 
which clearly satisfies \eqref{e:nonlineardispersion}.

Continuing to study the above hierarchy of equations, at $\mathcal{O}(\eps)$ we find
\begin{equation}\label{e:forcing}
k\partial_\theta \mathcal L[u^0]u^1=G[u^0]u^0_S+F(u^0),
\end{equation}
where here $\mathcal L[u^0]$ and $G[u^0]$ are linear differential operators defined via
\[
\left\{\begin{aligned}
\mathcal L[u^0]&:= \left(c  -2 u^0  -k^2 c u_{\th\th}^0\right)  + 2k^2 c u_\th^0\partial_\theta - k^2 c u^0 \partial_{\th}^2,\\
G[u^0]&:=\left(1+k^2u_{\theta\theta}^0\right)-k^2u^0\partial_{\theta}^2
\end{aligned}\right.
\]
supplemented with $1$-periodic boundary conditions, and
\begin{align*}
F(u^0)&=((u^0)^2)_X + \frac{3}{2}(k^2)_X c u^0 u_{\th\th}^0 - \frac{1}{2}(k^2)_X c (u_\th^0)^2\\
&\quad+ 2k^2 c_X u^0 u_{\th\th}^0 + 2k^2 c u^0 u_{X\th\th}^0 - k^2 c((u_\th^0)^2)_X
\end{align*}
contains all the nonlinear terms in $u^0$ and its derivatives.  Treating \eqref{e:forcing} as a forced linear equation for the unknown $u^1$, 
it follows by the Fredholm alternative that \eqref{e:forcing} is solvable in the class of $1$-periodic functions if and only if
\[
G[u^0]u^0_S+F(u^0)\perp{\rm ker}_{\rm L^2_{\rm per}(0,1)}\left(\mathcal L^\dag[u^0]\partial_\theta \right),
\]
where here 
\[
\mathcal L^\dag[u^0]=\left(c  -2 u^0  -k^2 c u_{\th\th}^0\right)  - 2k^2 c \partial_\theta\left(u_\th^0\;\cdot\,\right) - k^2 c \partial_{\theta}^2\left(u^0\;\cdot\,\right)
\]
denotes the adjoint operator of $\mathcal L[u^0]$ on $L^2_{\rm per}(0,1)$.  In particular, noting the identity\footnote{Although you can verify the identity using the forms listed above, this identity  immediately follows from the alternative form for $\mathcal L[f]$ given in \eqref{L_rep2} below.}  
\begin{equation}\label{e:L_relationship}
\mathcal L^\dag[f]\left(f^{-3}g\right)=f^{-3}\mathcal L[f]g,
\end{equation}
a straightforward calculation\footnote{Here, we are using that differentiating \eqref{e:w1} with respect to $\theta$ gives $\mathcal L[u^0]u^0_\theta=0$.  Further, we note the third
linearly independent solution of $\mathcal L^\dag[u_0]v_\theta=0$ is not periodic in $\theta$.}  shows that 
\[
{\rm ker}_{L^2_{\rm per}(0,1)}\left(\mathcal L^\dag[u^0]\partial_\theta \right)={\rm span}\left\{1,(u^0)^{-2}\right\}.
\]
Thus, our two solvability conditions become
\[
\left<1,G[u^0]u^0_S+F(u^0)\right>_{L^2_{\rm per}(0,1;d\theta)}=0\quad{\rm and}\quad \left<(u^0)^{-2},G[u^0]u^0_S+F(u^0)\right>_{L^2_{\rm per}(0,1;d\theta)}=0.
\]

To put the above solvability conditions in a more useful form, we note that since 
\[
G^\dag[u^0] = \left(1+k^2u_{\th\th}^0\right) - k^2 \d_{\th}^2\left(u^0 \cdot\right),
\] 
integration by parts (in $\theta)$ and the identity $G^\dag[u_0]1=1$ imply that the first equation above can be rewritten as
\begin{align*}
\partial_SM(u^0)
%\partial_S\left<1,u^0\right>_{L^2_{\rm per}(0,1;d\theta)}%&=-\left<1,F(u_0)\right>_{L^2_{\rm per}(0,1)}\\
%&= - \LA 1, ((u^0)^2)_X + \frac{3}{2}(k^2)_X c u^0 u_{\th\th}^0 - \frac{1}{2}(k^2)_X c (u_\th^0)^2 + 2k^2 c_X u^0 u_{\th\th}^0 + 2k^2 c u^0 u_{X\th\th}^0 - k^2 c((u_\th^0)^2)_X\RA\\
%&= - \LA 1, ((u^0)^2)_X - \frac{3}{2}(k^2)_X c u_\th^0 u_\th^0 - \frac{1}{2}(k^2)_X c (u_\th^0)^2 - 2k^2 c_X u_\th^0 u_\th^0 - 2k^2 c u_\th^0 u_{X\th}^0 - k^2 c((u_\th^0)^2)_X\RA\\
&= - \LA 1, ((u^0)^2)_X - 2(k^2)_X c (u_\th^0)^2 - 2k^2 c_X (u_\th^0)^2 - 2k^2 c((u_\th^0)^2)_X\RA\\
%&= - \LA 1, (u^0)^2 - 2k^2 c (u_\th^0)^2\RA_X\\
&= \partial_X\LA 1, 2k^2 c (u_\th^0)^2 - (u^0)^2\RA_{L^2_{\rm per}(0,1;d\theta)},
\end{align*}
where here $M(u^0)=\int_0^1u^0(\theta)d\theta$ is simply the conserved quantity $M$ in \eqref{e:mass} evaluated at the $1$-periodic traveling wave $u^0(\cdot)$.  
Similarly, using the identities
\[
\LA (u^0)^{-2}, u^0 u_{\th\th}^0 \RA = \LA (u^0)^{-2}, (u_\th^0)^2\RA \quad \LA (u^0)^{-2}, u^0 u_{X\th\th}^0\RA = \LA (u^0)^{-2}, u_\th^0 u_{X\th}^0\RA,
\] 
which follow from integration by parts, the second solvability condition can be rewritten as
\begin{align*}
\left<G^\dag[u^0](u^0)^{-2},u^0_S\right>&=-\LA (u^0)^{-2}, ((u^0)^2)_X + \frac{3}{2}(k^2)_X c (u_\th^0)^2 - \frac{1}{2}(k^2)_X c (u_\th^0)^2\RA\\
&\qquad -\LA  (u^0)^{-2},2k^2 c_X (u_\th^0)^2 + 2k^2 c u_\th^0 u_{X\th}^0 - k^2 c((u_\th^0)^2)_X\RA\\
&=-\left<(u^0)^{-2},((u^0)^2)_X\right>-2k\left(kc\right)_X\left<(u^0)^{-2},(u^0_\theta)^2\right>.
\end{align*}
Using \eqref{e:nonlineardispersion} and the fact that
\[
\partial_SQ(u^0)=-\left<G^\dag[u^0](u^0)^{-2},u_S\right>+2kk_S\left<(u^0)^{-2},(u^0_\theta)^2\right>,
\]
where here $Q(u^0)=\int_0^1\frac{u^0+(u^0_\theta)^2}{(u^0)^2}d\theta$ is the conserved quantity $Q$ in \eqref{e:mass} evaluated at the $1$-periodic 
traveling wave $u^0(\cdot)$, the above can be rewritten as
\[
\partial_SQ(u^0)=\left<(u^0)^{-2},((u^0)^2)_X\right>=\partial_X\left<1,2\ln|u^0|\right>.
\]
Taken together, \eqref{e:nonlineardispersion} and the above solvability conditions yield the first order, $3\times 3$ system
\begin{equation}\label{e:whitham}
\left\{ \begin{aligned} 	k_S &+ \partial_X(kc)=0\\
					M_S &= \partial_X\LA 1, 2k^2 c (u_\th^0)^2 - (u^0)^2\RA_{L^2_{\rm per}(0,1)}\\
					Q_S &= \partial_X\LA 1, 2\ln|u^0|\RA_{L^2_{\rm per}(0,1)}
\end{aligned}\right.
\end{equation}
which, by the above formal arguments, is expected  to govern (at least to leading order) the slow evolution of the wave number $k$ and the conserved quantities $M$ and $Q$
of a slow modulation of the periodic traveling wave $u^0$.

System \eqref{e:whitham} is referred to as the Whitham modulation system associated to the conduit equation \eqref{e:conduit1}.  
Heuristically, it is expected that the Whitham modulation equations \eqref{e:whitham} relate to the dynamical stability of periodic traveling wave
solutions of \eqref{e:conduit1} in the following way.  Suppose $(a_0,E_0,c_0)\in\mathcal{B}$ so that $\phi(\cdot;a_0,E_0,c_0)$ is an even, $T=1/k$-periodic 
solution of \eqref{e:profile}.  From the above formal analysis, we see that \eqref{e:profile} has a modulated periodic traveling wave of the form
\[
u(x,t;\eps)=\phi\left(\tfrac{1}{\eps}\psi(\eps x,\eps t);a(\eps x,\eps t),E(\eps x,\eps t),c(\eps x,\eps t)\right)+o(\eps),
\]
where the parameters $(a(\eps x,\eps t),E(\eps x,\eps t),c(\eps x,\eps t))$ evolve near $(a_0,E_0,c_0)$ in $\mathcal{B}$ in such a way
that $k$, $M$, and $Q$ satisfy the Whitham system \eqref{e:whitham}.  Note this requires that the nonlinear mapping
\[
\RM^3\ni (a,E,c)\mapsto (k,M,Q)\in\RM^3
\]
be locally invertible near $(a_0,E_0,c_0)$.  By the implicit function theorem, it is sufficient to assume the Jacobian of this mapping at $(a_0,E_0,c_0)$ is non-zero\footnote{This condition
will appear later in our rigorous theory as well: see the discussion following the proof of Theorem \ref{T:gker} below.  It also appears in the formal work
of Maiden \& Hoefer \cite{MH16}.}.
In particular, any $1$-periodic solution $\phi_0$
of \eqref{e:profile}, being independent of the slow variables $X$ and $S$, is necessarily a constant solution of \eqref{e:whitham}.
The stability of $\phi_0$ to slow modulations is thus expected to be governed (to leading order, at least) by the linearization of \eqref{e:whitham}
about $\phi_0$.  Specifically, using the chain rule to rewrite \eqref{e:whitham} in the quasilinear form
\begin{equation}\label{e:whitham_quasilinear}
\left(\begin{array}{c}k\\M \\ Q\end{array}\right)_S= {\bf D}(u^0)\left(\begin{array}{c}k \\M \\ Q\end{array}\right)_X,
\end{equation}
where here 
\[
{\bf D}(u)=\left(\begin{array}{ccc}
								- (c  + k c_k) & - k c_M & - k c_Q \\
								 \LA 1, 2k^2 c u_\th^2 - u^2\RA_k & \LA 1, 2k^2 c u_\th^2 - u^2\RA_M & \LA 1, 2k^2 c u_\th^2 - u^2\RA_Q \\
								\LA 1, 2\ln|u|\RA_k & \LA 1, 2\ln|u|\RA_M & \LA 1, 2\ln|u|\RA_Q
							\end{array}\right),
\]
it is natural to expect the stability of $\phi_0$ to slow modulations to be governed by the eigenvalues of the $3\times 3$ matrix ${\bf D}(\phi_0)$.
Indeed, linearizing \eqref{e:whitham_quasilinear} about the constant solution $\phi_0$, we see that the eigenvalues of the linearization are of the form
\[
\widetilde{\lambda_j}(\xi)=i\xi\alpha_j,
\]
where $\{\alpha_j\}_{j=1}^3$ are the eigenvalues of ${\bf D}(\phi_0)$ and $\xi\in\RM$.  
Consequently, if the Whitham system is weakly hyperbolic\footnote{Note full hyperbolicity of the system additionally requires the eigenvalues
$\alpha_j$ are semi-simple, i.e. that their algebraic and geometric multiplicities agree.}  at $\phi_0$, so that the eigenvalues of ${\bf D}(\phi_0)$ are all real, then 
the eigenvalues of the linearization of \eqref{e:whitham} are purely imaginary, indicating  a marginal (spectral) stability.  
Conversely, if ${\bf D}(\phi_0)$ has an eigenvalue with non-zero imaginary part, in which case \eqref{e:whitham_quasilinear} is elliptic at $\phi_0$,
then the linearization of \eqref{e:whitham}  has eigenvalues with positive real part, indicating (spectral) instability of $\phi_0$.

\

The goal of this paper is to rigorously validate the above predictions of Whitham modulation theory as they pertain to the stability of periodic traveling
wave solutions of \eqref{e:conduit1}.  Following the works in \cite{BrJZ,BrHJ16,BNR14}, this will be accomplished by using rigorous spectral perturbation
theory to analyze the spectrum of the linearization of \eqref{e:travel_conduit} about such a solution and, in particular, relating the spectrum
of the linearization in a neighborhood of the origin in the spectral plane to the eigenvalues of the matrix ${\bf D}(\phi_0)$ defined above.  
Our main result is the following, which
establishes that weak-hyperbolicity of the Whitham system \eqref{e:whitham} is indeed a necessary condition for the spectral stability of the underlying
wave $\phi_0$.  

\begin{theorem}\label{T:main}
Suppose $\phi_0$ is an even, $T_0=1/k_0$-periodic, strictly positive traveling wave solution of \eqref{e:conduit1} with wave speed $c_0>0$, and that the set of nearby periodic traveling
wave profiles $\phi$ with speed close to $c_0$ is a $3$-dimensional manifold parameterized %\footnote{Here, $M$ and $Q$ are conserved quantities
by $(k,M(\phi),Q(\phi))$, where $1/k$ denotes the fundamental period of $\phi$.
Then a necessary condition for $\phi_0$ to be spectrally stable is that the Whitham modulation system \eqref{e:whitham}
be weakly hyperbolic at $(k_0,M(\phi_0),Q(\phi_0))$, in the sense that all their characteristic speeds must be real.
\end{theorem}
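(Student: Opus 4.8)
\emph{Proof proposal.}
The plan is to analyze the spectrum of the operator obtained by linearizing \eqref{e:travel_conduit} about $\phi_0$ in a small neighborhood of the origin of the spectral plane, and to show that the three spectral branches emanating from $\lambda=0$ are, to leading order, governed by the eigenvalues of the Whitham matrix ${\bf D}(\phi_0)$ appearing in \eqref{e:whitham_quasilinear}. First I would linearize \eqref{e:travel_conduit} about the stationary profile $\phi_0$. A direct computation shows that the coefficient of the time derivative is precisely the operator $G[\phi_0]$ from the Whitham derivation, so that the spectral problem for perturbations $v=e^{\lambda t}w$ takes the generalized-eigenvalue form
\[
\lambda\, G[\phi_0]\,w=\partial_z\big(\mathcal L[\phi_0]\,w\big),
\]
with $T_0$-periodic coefficients; note that $\phi_0'$ lies in the kernel at $\lambda=0$ by translation invariance, consistent with $\mathcal L[\phi_0]\phi_0'=0$. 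Since the coefficients are $T_0$-periodic, I would then apply the Floquet--Bloch transform to write the $L^2(\RM)$-spectrum as the union over Bloch frequencies $\xi\in[-\pi/T_0,\pi/T_0)$ of the discrete sets of $\lambda$ for which the above problem admits a nonzero solution $w=e^{i\xi z}\tilde w$ with $\tilde w$ being $T_0$-periodic; this reduces matters to a one-parameter family of spectral problems on $L^2_{\rm per}(0,T_0)$ depending analytically on $\xi$.

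The core of the argument is a perturbative study of the eigenvalues near $\lambda=0$ for $|\xi|$ small, in the spirit of \cite{Serre05,BNR14}. At $\xi=0$ the problem has a generalized kernel which, using the hypothesis that the nearby waves form a $3$-dimensional manifold parameterized by $(k,M(\phi),Q(\phi))$, I expect to show is exactly three-dimensional (this is the content of the forthcoming Theorem \ref{T:gker}); the nondegeneracy of the map $(a,E,c)\mapsto(k,M,Q)$ is precisely what keeps this generalized kernel from collapsing. Because the Bloch family is analytic in $\xi$, the total algebraic multiplicity of the spectrum near the origin stays equal to three for $|\xi|$ small, and a Lyapunov--Schmidt reduction onto the associated three-dimensional total eigenspace produces a $3\times3$ reduced problem. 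Since the relevant spectrum touches $\lambda=0$ at $\xi=0$, I would extract a factor of $i\xi$ and write the reduced problem as a matrix pencil
\[
\lambda\,\mathbf E(\xi)\,\mathbf v=i\xi\,\mathbf F(\xi)\,\mathbf v,
\]
analytic in $\xi$, where $\mathbf E(0)$ is invertible exactly by the manifold/nondegeneracy hypothesis.

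It then remains to identify the leading matrices. Carrying out the reduction explicitly in Section \ref{S:comparison}, I would pair the $\xi$-derivatives of the Bloch operator against the kernel and cokernel elements and recognize the resulting quantities as parameter derivatives of the wave number $k$ and the conserved quantities $M$ and $Q$, together with the fluxes on the right-hand side of \eqref{e:whitham}. The upshot is that $\mathbf E(0)$ encodes the Jacobian of $(a,E,c)\mapsto(k,M,Q)$ while $\mathbf F(0)$ encodes the averaged fluxes, so that $\mathbf E(0)^{-1}\mathbf F(0)={\bf D}(\phi_0)$; consequently the three small eigenvalues admit the expansion $\lambda_j(\xi)=i\alpha_j\xi+o(\xi)$, with $\{\alpha_j\}_{j=1}^3$ the eigenvalues of ${\bf D}(\phi_0)$, i.e.\ the characteristic speeds of the Whitham system. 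Given this identification the theorem is immediate: spectral stability forces $\mathrm{Re}\,\lambda_j(\xi)\le0$ for all small real $\xi$, while $\mathrm{Re}\,\lambda_j(\xi)=-\xi\,\mathrm{Im}(\alpha_j)+o(\xi)$ changes sign with $\xi$ unless $\alpha_j\in\RM$; hence every characteristic speed must be real, which is exactly weak hyperbolicity of \eqref{e:whitham} at $(k_0,M(\phi_0),Q(\phi_0))$.

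I expect the main obstacle to be the bookkeeping underlying the last step: first, establishing the exact three-dimensional generalized kernel and its Jordan structure at $(\lambda,\xi)=(0,0)$, and second, matching the rigorously computed pencil $(\mathbf E(0),\mathbf F(0))$ with the formally derived ${\bf D}(\phi_0)$. Both rest on correctly identifying the spectral projections with derivatives of $M$, $Q$, and $k$ along the wave manifold and on the same Fredholm solvability conditions that drive the Whitham derivation; the invertibility of $\mathbf E(0)$ --- equivalently the nondegeneracy of $(a,E,c)\mapsto(k,M,Q)$ assumed in the theorem --- is what guarantees that the $\alpha_j$ are finite and that the reduced pencil exhibits clean linear-in-$\xi$ behavior rather than fractional-power splitting.
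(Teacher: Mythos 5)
Your proposal is correct and follows essentially the same route as the paper: Floquet--Bloch decomposition, analysis of the three-dimensional generalized kernel at $(\lambda,\xi)=(0,0)$ (the paper's Theorem \ref{T:gker} and Corollary \ref{C:gker_rescaled}), spectral-projection reduction to a $3\times 3$ problem with a rescaling in $\xi$ (the paper's conjugation by $S(\xi)$, equivalent to your pencil normalization), and term-by-term identification with the linearized Whitham matrix. The only cosmetic discrepancy is that the paper's identification reads ${\bf D}(\phi_0)=\widehat{D}_0-cI$ --- a harmless real shift coming from working in the co-moving frame --- which does not affect the realness criterion and hence the conclusion.
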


To prove Theorem \ref{T:main} we will show that, under appropriate non-degeneracy assumptions, the spectrum of the linearization of \eqref{e:travel_conduit} about
a periodic traveling wave $\phi_0$ consists, in a sufficiently small neighborhood of the origin, of precisely three $C^1$ curves which expand as
\[
\lambda_j(\xi)=\widetilde{\lambda_j}(\xi)+\mathcal{O}(\xi)=i\alpha_j\xi+\mathcal{O}(\xi),\quad |\xi|\ll 1,
\]
where here the $\alpha_j$ are precisely the eigenvalues of the matrix ${\bf D}(\phi_0)$.  Interestingly, this shows that spectral stability in a neighborhood of the origin
of $\phi_0$, otherwise known as ``modulational stability", cannot be concluded from the weak, or even strong, hyperbolicity of the Whitham modulation system \eqref{e:whitham}.
While we do not pursue it here, such information may be able to be deduced from determining the second-order corrector in $\xi$ to the spectral curves $\lambda_j(\xi)$ deduced above.

%%%%%%%%%%%%%%%%%%%%%%%%%%%%%%%%%%%%%%%%%%%%%%%%%
%%%%%%%%%%%%%%%%%%%%%%%%%%%%%%%%%%%%%%%%%%%%%%%%%
\section{Rigorous Modulational Stability Theory}\label{S:rigorous} 
%%%%%%%%%%%%%%%%%%%%%%%%%%%%%%%%%%%%%%%%%%%%%%%%%
%%%%%%%%%%%%%%%%%%%%%%%%%%%%%%%%%%%%%%%%%%%%%%%%%

We now begin our rigorous mathematical study of the dynamical stability of periodic traveling wave solutions
of \eqref{e:conduit1} when subject to localized, i.e. integrable, perturbations on the line.  
Following  \cite{BrJZ,BrHJ16,BNR14}, we conduct a detailed analysis of the spectral problem associated with
the linearization of \eqref{e:conduit1} about a periodic traveling wave solution.  The first step in this analysis is to understand the 
structure of the generalized kernel of the associated linearized operators when subject to perturbations that are co-periodic with the underlying
wave.  With this information in hand, we then use Floquet-Bloch theory and rigorous spectral perturbation theory to obtain an asymptotic
description of the spectrum of the linearization considered as an operator on $L^2(\RM)$ in a sufficiently small neighborhood of the origin.

\subsection{Linearization \& Set Up}\label{S:linearize} 

To begin, let $(a,E,c)\in\mathcal{B}$ and denote by $\phi=\phi(\cdot;a,E,c)$ the corresponding even, $T=T(a,E,c)$-periodic equilibrium solution of 
\eqref{e:travel_conduit}.
We are now interested in rigorously describing the local dynamics of \eqref{e:travel_conduit} near $\phi$.  Specifically, we are interested in understanding if $\phi$ is stable
to small \emph{localized}, i.e. integrable on $\RM$, perturbations.  
To this end, we note that the linearization of \eqref{e:travel_conduit} about $\phi$ is given by
\begin{equation}\label{lin_evolve}
G[\phi]v_t=\partial_z\mathcal{L}[\phi]v,
\end{equation}
where here $\G$ and $\mathcal{L}[\phi]$ are linear operators on $L^2(\RM)$ defined by
\begin{equation}\label{ops}
\left\{\begin{aligned}
\G f& := f - (\phi^2 (\phi^{-1} f)')' = f + \phi '' f - \phi f '',\\
\mathcal{L}[\phi] f &:= cf - 2\phi f - c\phi '' f + 2c\phi ' f ' - c\phi f ''.
\end{aligned}\right.
\end{equation} 
Note that $\mathcal{L}[\phi]$ can also be written in the form
\begin{equation}\label{L_rep2}
\mathcal{L}[\phi] f = cf - 2\phi f - 2c\phi(\phi^{-1}\phi ')' f - c\phi^2 (\phi^{-1}f)''.
\end{equation}
Observe these are both closed linear operators on $L^2(\RM)$ with densely defined domains $H^2(\RM)$.  As the linear evolution equation \eqref{lin_evolve} is autonomous in time,
its dynamics can be (at least partly) understood by studying the associated generalized spectral problem
\begin{equation}\label{lin}
\lambda G[\phi]v=\partial_z\mathcal{L}[\phi]v,
\end{equation}
posed on $L^2(\RM)$, where here $\lambda\in\CM$ is a spectral parameter corresponding to the temporal frequency of the perturbation. 
To put \eqref{lin} in a more standard form, we note the following lemma.

\begin{lemma}\label{L:inverse}
The operator $\G: H^1(\R) \to L^2(\R)$ defined in \eqref{ops}(i) is a (weakly) invertible operator.  That is, for every $g\in L^2(\RM)$
the equation
\[
G[\phi]v=g
\]
has a unique weak solution in $H^1(\RM)$.
\end{lemma}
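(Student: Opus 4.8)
My plan is to recast the equation in divergence (Sturm--Liouville) form and then apply the Lax--Milgram theorem. The starting point is the representation $\G f = f - (\phi^2(\phi^{-1}f)')'$ recorded in \eqref{ops}(i). Multiplying $\G v = g$ by a test function $\psi$ and integrating by parts once motivates the weak formulation: find $v\in H^1(\R)$ such that
\[
\int_\R v\,\psi\,dz + \int_\R \phi^2(\phi^{-1}v)'\,\psi'\,dz = \int_\R g\,\psi\,dz
\]
for all $\psi\in H^1(\R)$. Observe that this formulation places only a single derivative on $v$, which is precisely why $H^1(\R)$ is the natural solution space and why ``weak'' solvability is the right notion.

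The key structural observation I would exploit is the substitution $w:=\phi^{-1}v$. Since $\phi$ is a continuous, strictly positive, periodic function, it is bounded above and bounded away from zero, say $0<m\le \phi\le M<\infty$, and (being a smooth solution of the profile ODE) has bounded derivatives as well; hence multiplication by $\phi^{\pm 1}$ is a bounded isomorphism of $H^1(\R)$ and of $L^2(\R)$, so that $v\in H^1(\R)$ if and only if $w\in H^1(\R)$. Writing $v=\phi w$ and using $(\phi^{-1}v)'=w'$, the weak formulation above becomes exactly
\[
B(w,\psi):=\int_\R \phi^2\, w'\,\psi'\,dz + \int_\R \phi\, w\,\psi\,dz = \int_\R g\,\psi\,dz,
\]
valid for all $\psi\in H^1(\R)$, which is the symmetric bilinear form associated to the self-adjoint operator $w\mapsto \phi w-(\phi^2 w')'$. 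In this form the difficulty has been cleanly isolated.

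With the bilinear form in hand, the remaining verifications are routine bookkeeping. Boundedness of $B$ on $H^1(\R)$ follows from $\phi\le M$ together with Cauchy--Schwarz, and the functional $\psi\mapsto\int_\R g\,\psi\,dz$ is bounded on $H^1(\R)$ because $g\in L^2(\R)$. The one step I expect to carry the real content is coercivity: since both coefficients are bounded below,
\[
B(w,w)=\int_\R\phi^2\,(w')^2\,dz+\int_\R\phi\, w^2\,dz\ge \min\{m^2,m\}\,\hnorm{w}{1}^2.
\]
I want to stress that coercivity here rests on the strictly positive zeroth-order coefficient $\phi$, and \emph{not} on any Poincar\'e-type inequality, which is unavailable on the unbounded domain $\R$; this is the essential point that makes the argument work on the whole line rather than on a bounded interval. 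Lax--Milgram (indeed the Riesz representation theorem, as $B$ is symmetric) then produces a unique $w\in H^1(\R)$, whence $v=\phi w\in H^1(\R)$ is the unique weak solution of $\G v=g$, establishing the claimed weak invertibility.
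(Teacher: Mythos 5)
Your proof is correct and follows essentially the same route as the paper: your substitution $w=\phi^{-1}v$ is precisely the paper's reduction of $G[\phi]$ to the operator $H[\phi]f:=G[\phi](\phi f)=\phi f-(\phi^2 f')'$, which is then inverted as a symmetric, uniformly elliptic divergence-form operator via the Riesz representation theorem. The only difference is that you spell out the boundedness and coercivity estimates (correctly noting that coercivity comes from the strictly positive zeroth-order coefficient, not a Poincar\'e inequality) which the paper compresses into the phrase ``a standard argument using the Riesz representation theorem.''
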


\begin{proof}
Observe that by defining
\[
H[\phi] f := \G(\phi f) =  \phi f - (\phi^2 f ')'
\]
we have $H[\phi](\phi^{-1}f) = \G f$ so that, in particular, $\G$ is (weakly) invertible if and only if $H[\phi]$ is (weakly) invertible. 
Since $\phi>0$ uniformly, it follows that $H[\phi]$ is a symmetric, uniformly elliptic differential operator.  Consequently, a standard argument
using the Riesz representation theorem implies that for every $g\in L^2(\RM)$ the elliptic equation
\[
H[\phi]f=g
\]
has a unique weak solution in $H^1(\RM)$, i.e. that $H[\phi]:H^1(\RM)\to L^2(\RM)$ is (weakly) invertible.  The result now follows.
\end{proof}

\begin{remark}
The (weak) invertibility of $\G$ implies that the bilinear form generated by\footnote{Throughout, we denote, with a slight abuse of notation, the operator $\left(G[\phi]\right)^{-1}$ by 
simply $G^{-1}[\phi]$.  The same abuse of notation will be used when referring to adjoints of operators depending on $\phi$.}
$\Ginv$ is well-defined on $L^2(\RM)$.  As we will see, this will be sufficient in order
to verify Theorem \ref{T:main}.  
\end{remark}

By Lemma \ref{L:inverse}, the generalized spectral problem \eqref{lin} is equivalent to the spectral problem for the linear operator
\begin{equation}\label{A}
A[\phi] := \Ginv \d_z \mathcal{L}[\phi]
\end{equation}
considered as a closed, densely defined linear operator on $L^2(\RM)$.  Motivated by the above considerations, we say that a periodic traveling wave $\phi$
of \eqref{e:conduit1} is said to be \emph{spectrally unstable} if the $L^2(\RM)$-spectrum of $A[\phi]$ intersects the open right half plane, i.e. if
\[
\sigma_{L^2(\RM)}\left(A[\phi]\right)\cap\left\{\lambda\in\CM:{\rm Re}(\lambda)>0\right\}\neq\emptyset,
\]
while it is \emph{spectrally stable} otherwise.  This motivates a detailed study of the spectrum of the linear operator $A[\phi]$.

\begin{remark}
Observe that while \eqref{e:conduit1} is a nonlinear dispersive PDE, it does not possess a Hamiltonian structure.  Consequently, while the spectrum of $A[\phi]$ is 
symmetric about the real axis, owing to the fact that $\phi$ is real-valued, it is not necessarily symmetric about the imaginary axis.
\end{remark}

\begin{remark}\label{r:solitary_lin}
Recall that in \cite{SW08} the authors considered the stability of solitary traveling wave solutions of the magma equations \eqref{e:magma}, which corresponds
to the conduit equation \eqref{e:conduit1} when $(n,m)=(2,1)$.  In that case, the  linearization (in
\cite{SW08}) of \eqref{e:travel_conduit} about a solitary wave $\phi$ is given, after some manipulation, by
%In the {\bf Solitary Wave paper}, the linearization of \eqref{e:conduit2} about $\phi$ (for our specific parameters and numbers 
%$n = 2, m = 1$) is, after some manipulation, given by:
\[
\G v_t = v_t - (\phi^2(\phi^{-1}v_t)_z)_z = \partial_z\left[-cv + 2(c-1)\phi^{-1}v  - c\phi^2\partial_z^2(\phi^{-1}v)\right]% =: \d_y \widetilde \L v
\]
which, using \eqref{L_rep2}, is equivalent to our representation \eqref{lin} provided that  $\phi$ satisfies 
%So $\widetilde \A := \Ginv \d_y \widetilde \L$ is the linearized operator in the Solitary Wave paper.  Note $\A v = \widetilde \A v$ if and only if
\[
%\L v = \widetilde \L v \Leftrightarrow -2\phi v - 2c\phi(\phi^{-1}\phi')'v = 2(c-1)\phi^{-1}v - 2cv \Leftrightarrow 
\phi + c\phi(\phi^{-1}\phi')' = c + (1-c)\phi^{-1}
\]
Recalling Remark \ref{r:solitary_exist}, this latter condition follows directly from \eqref{e:profile} with the choice $2cE=c-1$ associated to the solitary
waves considered in \cite{SW08}.
\end{remark}

To begin our study of the $L^2(\RM)$-spectrum of $A[\phi]$, we first note that since $A[\phi]$ has $T$-periodic coefficients, standard results from Floquet theory
dictate that non-trivial solutions of the spectral problem
\begin{equation}\label{spec}
A[\phi]v=\lambda v
\end{equation}
cannot be integrable\footnote{In particular, such solutions can not have finite norm in $L^p(\RM)$ for any $1\leq p<\infty$.} 
on $\RM$ and that, at best, they can be bounded functions on the line: see, for example, \cite{KP_book,RS4}.  Further, any bounded solution of \eqref{spec} must be of the form
\[
v(x)=e^{i\xi x}w(x)
\]
for some $w\in L^2_{\rm per}(0,T)$ and $\xi\in[-\pi/T,\pi/T)$.  From these observations, it can be shown that $\lambda\in \CM$ belongs to the $L^2(\RM)$-spectrum of $A[\phi]$
if and only if there exists a $\xi\in[-\pi/T,\pi/T)$ such that the problem
\begin{equation}\label{bvp}
\left\{\begin{aligned}
			&A[\phi]v=\lambda v\\
			&v(x+T)=e^{i\xi T}v(x),
			\end{aligned}\right.
\end{equation}
has a non-trivial solution, or, equivalently, if and only if there exists a $\xi\in[-\pi/T,\pi/T)$ and a non-trivial $w\in L^2_{\rm per}(0,T)$
such that
\begin{equation}\label{bloch_spec}
\lambda w = e^{-i\xi x}A[\phi]e^{i\xi x}w=:A_\xi[\phi]w.
\end{equation}
For details, see \cite{KP_book,RS4,J13}, for example.
The one-parameter family of operators $A_\xi[\phi]$ are called the \emph{Bloch operators} associated to $A[\phi]$, and $\xi$ is referred to as the \emph{Bloch parameter} or sometimes
as the \emph{Bloch frequency}.
Since the Bloch operators have compactly embedded domains in $L^2_{\rm per}(0,T)$, it follows for each $\xi\in[-\pi/T,\pi/T)$ that the $L^2_{\rm per}(0,T)$
spectrum of $A_\xi[\phi]$ consists entirely of isolated eigenvalues with finite algebraic multiplicities.  Furthermore, we have the spectral decomposition
\begin{equation}\label{spec_decomp}
\sigma_{L^2(\RM)}\left(A[\phi]\right)=\bigcup_{\xi\in[-\pi/T,\pi/T)}\sigma_{L^2_{\rm per}(0,T)}\left(A_\xi[\phi]\right),
\end{equation}
thereby continuously parameterizing the essential $L^2(\RM)$-spectrum of $A[\phi]$ by a one-parameter family of $T$-periodic eigenvalues of the associated
Bloch operators.  For more details, see \cite{RS4}.

To determine the spectral stability of a periodic traveling wave $\phi$, one must therefore determine all of the $T$-periodic eigenvalues for each Bloch operator for $\xi\in[-\pi/T,\pi/T)$.
Outside of some very special cases, one does not expect to be able to do this complete spectral analysis explicitly.  Thankfully, however, for the purposes of modulational stability analysis,
we need only consider the spectrum of the operators $A_\xi[\phi]$ in a neighborhood of the origin in the spectral plane and only for $|\xi|\ll 1$.
To motivate this, observe from \eqref{bvp} that the spectrum of $A_0[\phi]$ corresponds to the spectral stability of $\phi$ to $T$-periodic perturbations, i.e. to perturbations
with the same period as the carrier wave.  Similarly, $|\xi|\ll1$ corresponds to long wavelength perturbations of the carrier wave.  Furthermore, slow modulations of $\phi$
form a special class of long wavelength perturbations in which the effect of the perturbation is to slowly vary, namely modulate, the wave characteristics -- the parameters $a$, $E$ and $c$ in
the present setting -- and the translational mode.  As we will see, variations in these parameters naturally provide spectral information about the co-periodic Bloch operator $A_0[\phi]$
at the origin in the spectral plane.  From the above considerations, it is natural to expect that the spectral stability of the underlying wave $\phi$ to slow modulations corresponds
to the case when the spectrum of the Bloch operators $A_\xi[\phi]$ near $(\lambda,\xi)=(0,0)$ lie in the closed left half-plane.  For more discussion regarding this motivation,
see \cite{BrHJ16}.

In order to prove Theorem \ref{T:main}, our program will roughly break down into three steps.  First, we will analyze the structure of the generalized kernel of the unmodulated Bloch operator
$A_0[\phi]$, showing that, under certain geometric conditions, this operator has $\lambda=0$ as an eigenvalue with algebraic multiplicity three and geometric multiplicity
two.  Secondly, we will use rigorous spectral perturbation theory to examine how the spectrum near the origin of the modulated operators $A_\xi[\phi]$ bifurcates from
$\lambda=0$ for $0<|\xi|\ll 1$.  Through this, we will derive a $3\times 3$ linear system that encodes the leading order asymptotics of the spectral curves
near $\lambda=0$ for $0<|\xi|\ll 1$.  Finally, we will see by a direct term by term comparison that this linear system, derived through rigorous spectral perturbation theory,
agrees \emph{exactly} (up to a harmless shift by the identity) with the linearized Whitham modulation system \eqref{e:whitham}.

%%%%%%%%%%%%%%%%%%%%%%%%%%%%%%%%%%%%%%%%%%%%%%%%%
%%%%%%%%%%%%%%%%%%%%%%%%%%%%%%%%%%%%%%%%%%%%%%%%%
\subsection{Analysis of the Unmodulated Operators}\label{S:gkernels}

As described above, the first main step in our analysis is to understand the $T$-periodic generalized kernel of the unmodulated operator $A_0[\phi]$ defined in \eqref{A}, as well
as its adjoint operator\footnote{Information about the adjoint is necessary to construct the spectral projections for $A_0[\phi]$ at $\lambda=0$.}  $A_0^\dag[\phi]$.  
We begin by characterizing the $T$-periodic kernel of $\mathcal{L}[\phi]$ and its adjoint.  To this end, note that differentiating
 the profile equation \eqref{e:profile} with respect to $z$, as well as with respect to the parameters
$a$, $E$, and $c$, yields the identities
\begin{equation}\label{e:L_ident1}
\mathcal{L}[\phi]\phi'=0=\mathcal{L}[\phi]\phi_a,~~\mathcal{L}[\phi]\phi_E=2c,~~\mathcal{L}[\phi]\phi_c=2E-\left(\phi+(\phi')^2-\phi\phi''\right).
\end{equation}
Recalling that $\mathcal{L}[\phi]$ and $\mathcal{L}^\dag[\phi]$ are related via \eqref{e:L_relationship}, 
we have the following result.

\begin{lemma}\label{L:L_ker}
Let $\phi$ be a non-trivial $T$-periodic solution of the profile equation \eqref{e:profile}.  So long as $T_a\neq 0$, we have 
\[
\ker_{\rm L^2_{\rm per}(0,T)}\left(\mathcal{L}[\phi]\right)={\rm span}\left\{\phi'\right\}\quad{\rm and}
	\quad\ker_{\rm L^2_{\rm per}(0,T)}\left(\mathcal{L}^\dag[\phi]\right)={\rm span}\left\{\phi^{-3}\phi'\right\}.
\]
Under the same assumption, we also have
\[
\ker_{\rm L^2_{\rm per}(0,T)}\left(A^\dag[\phi]\right)={\rm span}\left\{1,G^\dag[\phi]\phi^{-2}\right\}.
\]
\end{lemma}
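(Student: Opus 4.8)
The plan is to compute the $L^2_{\rm per}(0,T)$-adjoint of $\A=\Ginv\partial_z\L$ and then reduce the kernel equation for $\Aadj$ to a statement about the kernel of $\mathcal{L}^\dag[\phi]$, which has already been identified in the previous part of this lemma. Taking adjoints and using that $\partial_z^\dag=-\partial_z$ on $T$-periodic functions together with $(\Ginv)^\dag=\Gadjinv$, where the inverse is understood in the weak sense provided by Lemma \ref{L:inverse}, gives
\[
\Aadj=-\,\mathcal{L}^\dag[\phi]\,\partial_z\,\Gadjinv.
\]
To avoid manipulating the weakly-defined inverse directly, I would carry out the reduction at the level of the bilinear form generated by $\Ginv$ (as in the Remark following Lemma \ref{L:inverse}): for $v$ ranging over the domain of $\A$, setting $w:=\Gadjinv\psi$, so that $\psi=\Gadj w$, one computes
\[
\LA \A v,\psi\RA=\LA \partial_z\L v,w\RA=-\LA v,\mathcal{L}^\dag[\phi]\partial_z w\RA.
\]
Since this must vanish for all admissible $v$, we conclude that $\psi\in\ker_{L^2_{\rm per}(0,T)}(\Aadj)$ if and only if $\mathcal{L}^\dag[\phi]\partial_z w=0$, i.e. if and only if $\partial_z w$ lies in the $T$-periodic kernel of $\mathcal{L}^\dag[\phi]$.

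The remaining steps are then short. First, by the second assertion of this lemma, $\ker_{L^2_{\rm per}(0,T)}(\mathcal{L}^\dag[\phi])=\Span\{\phi^{-3}\phi'\}$, so $\partial_z w=\beta\,\phi^{-3}\phi'$ for some constant $\beta$. Second, and this is the crux, I would note the exact-derivative identity
\[
\phi^{-3}\phi'=-\tfrac12\left(\phi^{-2}\right)',
\]
which shows in particular that $\phi^{-3}\phi'$ has zero mean over a period (equivalently $\oint_\gamma\phi^{-3}\,d\phi=0$ around the closed orbit). Hence the equation $\partial_z w=\beta\,\phi^{-3}\phi'$ is solvable within the class of $T$-periodic functions, with $w=-\tfrac{\beta}{2}\phi^{-2}+\const$, so that the admissible $w$ fill out exactly $\Span\{1,\phi^{-2}\}$. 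Third, mapping back through $\Gadj$ and using the identity $\Gadj 1=1$ — a one-line computation from $\Gadj g=g+\phi''g-(\phi g)''$ — yields
\[
\ker_{L^2_{\rm per}(0,T)}\left(\Aadj\right)=\Gadj\,\Span\{1,\phi^{-2}\}=\Span\left\{1,\Gadj\phi^{-2}\right\}.
\]
Linear independence of the two generators is immediate: $1$ and $\phi^{-2}$ are independent since $\phi$ is non-constant, and $\Gadj$ is injective, being the adjoint of the (weakly) invertible operator $\G$ of Lemma \ref{L:inverse}.

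The main obstacle I anticipate is bookkeeping rather than conceptual: one must handle the weakly-defined inverse $\Gadjinv$ with care, which is why I would phrase the entire reduction through the bilinear form of $\Ginv$ so that every pairing is manifestly well-defined on $L^2_{\rm per}(0,T)$, and one must track the dimension correctly across the maps $w\mapsto\partial_z w\mapsto \mathcal{L}^\dag[\phi]\partial_z w$. The one genuinely substantive point is the recognition that the kernel element $\phi^{-3}\phi'$ of $\mathcal{L}^\dag[\phi]$ is a perfect derivative; this zero-mean property is precisely the solvability condition guaranteeing a $T$-periodic antiderivative, and it is what promotes the one-dimensional kernel of $\mathcal{L}^\dag[\phi]$, together with the constants arising from $\Gadj 1=1$, into the two-dimensional kernel of $\Aadj$, producing the nontrivial generator $\Gadj\phi^{-2}$.
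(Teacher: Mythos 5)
Your treatment of the kernel of $\Aadj$ is correct and in fact follows the same route as the paper: write $\Aadj=-\mathcal{L}^\dag[\phi]\,\partial_z\,\Gadjinv$, substitute $w=\Gadjinv\psi$, reduce to $\partial_z w\in\ker_{L^2_{\rm per}(0,T)}\left(\mathcal{L}^\dag[\phi]\right)$, integrate using $\phi^{-3}\phi'=-\tfrac12\left(\phi^{-2}\right)'$, and map back through $\Gadj$ using $\Gadj 1=1$. The paper compresses this step into two sentences; your version makes explicit what it leaves implicit (the exact-derivative observation guaranteeing a $T$-periodic antiderivative, and the injectivity of $\Gadj$), which is a genuine improvement in readability.

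However, there is a genuine gap: the lemma makes three assertions and you prove only the last one. You invoke $\ker_{L^2_{\rm per}(0,T)}\left(\mathcal{L}^\dag[\phi]\right)=\Span\{\phi^{-3}\phi'\}$ as ``already identified in the previous part of this lemma,'' but that characterization \emph{is} part of the statement to be proved, so citing it is circular. Moreover, the first two assertions are exactly where the hypothesis $T_a\neq 0$ does its work; your argument never uses it. To close the gap you need: (i) to note that differentiating the profile equation \eqref{e:profile} with respect to $z$ and with respect to $a$ gives $\mathcal{L}[\phi]\phi'=0=\mathcal{L}[\phi]\phi_a$, so the odd function $\phi'$ and the even function $\phi_a$ are two linearly independent solutions of the second-order ODE $\mathcal{L}[\phi]f=0$; (ii) to show that $\phi_a$ fails to be $T$-periodic precisely when $T_a\neq 0$, which the paper does by differentiating the periodicity relations $\phi(0;a,E,c)=\phi(T;a,E,c)$ and $\phi'(0;a,E,c)=\phi'(T;a,E,c)$ with respect to $a$, obtaining that the periodicity defect of $(\phi_a,\phi_a')$ equals $T_a\,(\phi'(T),\phi''(T))^T$, and then using that $(\phi'(T),\phi''(T))\neq(0,0)$ for non-trivial $\phi$; and (iii) to transfer this to the adjoint via the conjugation identity \eqref{e:L_relationship}, $\mathcal{L}^\dag[\phi]\left(\phi^{-3}g\right)=\phi^{-3}\mathcal{L}[\phi]g$, which shows that $f\mapsto\phi^{-3}f$ maps the $T$-periodic kernel of $\mathcal{L}[\phi]$ bijectively onto that of $\mathcal{L}^\dag[\phi]$. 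Once these are in place, your argument for $\ker\left(\Aadj\right)$ completes the proof.
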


\begin{proof}
Recalling $\phi$ may be chosen to be even, we have that $\phi'$ and $\phi_a$ are odd and even functions of $z$, respectively, so it follows from \eqref{e:L_ident1} that $\phi'$ and $\phi_a$ provide
two linearly independent solutions of the second order differential equation $\mathcal{L}[\phi]f=0$.  To identify the kernel of $\mathcal{L}[\phi]$, we must impose
$T$-periodic boundary conditions.  Since $\phi'$ is clearly $T$-periodic, it follows that the $T$-periodic kernel of $\mathcal{L}[\phi]$
has dimension at least one.  However, the fact that $T$ depends on the parameter $a$ implies that the function $u_a$ is generally not periodic.  Indeed, 
differentiating the obvious relation
\[
\left(\begin{array}{c}\phi(0;a,E,c)\\ \phi'(0;a,E,c)\end{array}\right)=\left(\begin{array}{c}\phi(T;a,E,c)\\ \phi'(T;a,E,c)\end{array}\right)
\]
with respect to $a$ gives 
\[
\left(\begin{array}{c}\phi_a(0)\\ \phi_{ax}(0) \end{array}\right)-\left(\begin{array}{c}\phi_a(T)\\ \phi_{ax}(T) \end{array}\right)
=T_a\left(\begin{array}{c}\phi'(T)\\ \phi''(T) \end{array}\right)
\]
which, using that $\mathcal{L}[\phi]$ is a second order differential equation and that $\phi$ is non-trivial\footnote{Note since $\phi$ satisfies the second order ODE
\eqref{e:profile}, vanishing of the vector $(\phi'(T),\phi''(T))^T$ would imply $\phi$ is the the trivial solution by uniqueness.  Alternatively, note that while $\phi'(T)=0$
by normalization, a direct calculation from \eqref{e:quad} shows that $\phi''(T) = -V_\phi(\phi_{\rm min};a,c)$, which again is non-zero since
$\phi$ is not an equilibrium solution of \eqref{e:profile}.},
 implies that $\phi_a$ is $T$-periodic if and only if $T_a$ is zero.  
This yields the characterization of the $T$-periodic kernel of $\mathcal{L}[\phi]$, and the kernel of $\mathcal{L}^\dag[\phi]$
follows immediately from \eqref{e:L_relationship}.

Finally, to characterize the $T$-periodic kernel of $A^\dag[\phi]$, observe that
\[
A^\dag[\phi]=-\mathcal{L}^\dag[\phi]\partial_x(G^{-1})^\dag[\phi].
\]
Since the adjoint of $G[\phi]$ is invertible on the space of $T$-periodic functions, and recalling $G^\dag[\phi]1=1$, the claim now follows from the characterization of the kernel of $\mathcal{L}^\dag[\phi]$.
\end{proof}

Next, we use Lemma \ref{L:L_ker} along with the Fredholm alternative to identify, under appropriate genericity conditions, the $T$-periodic generalized kernels of $A[\phi]$
and $A^\dag[\phi]$.  To this end, observe that \eqref{e:L_ident1} implies that
\[
A[\phi]\{\phi',\phi_a,\phi_E\}=0~~{\rm and}~~A[\phi]\phi_c=-\phi',
\]
which, among other things, yields three linearly independent functions satisfying the third order ODE $\partial_x\mathcal{L}[\phi]f=0$.  In Lemma \ref{L:L_ker} above, 
we showed that $\phi_a$ is not $T$-periodic provided that $T_a$ is non-zero.  Using similar arguments, it is readily seen that the functions 
$\phi_E$ and $\phi_c$ are not $T$-periodic provided that $T_E$ and $T_c$ are non-zero, respectively.  Indeed, we find that
\[
\left(\begin{array}{c}\phi_E(0)\\ \phi_{Ex}(0) \\ \phi_{Exx}(0)\end{array}\right)-\left(\begin{array}{c}\phi_E(T)\\ \phi_{Ex}(T) \\ \phi_{Exx}(T)\end{array}\right)
=T_E\left(\begin{array}{c}\phi'(T)\\ \phi''(T) \\ \phi'''(T)\end{array}\right),
\]
with an analogous equation holding for $\phi_c$.  Recalling by the above discussion that $\phi''(T)$ is non-zero, the desired result follows.  For notational simplicity, 
we introduce the following Poisson bracket style notation for two-by-two Jacobian determinants
\[
\{F,G\}_{x,y}:=\det\left(\begin{array}{cc} F_x & F_y\\ G_x & G_y\end{array}\right)
\]
and an analogous notation for three-by three Jacobian determinants:
\[
\{F,G,H\}_{x,y,z}:=\det\left(\begin{array}{ccc} F_x & F_y & F_z\\ G_x & G_y & G_z \\ H_x & H_y & H_z\end{array}\right).
\]
Using the above identities, it follows that, while $\phi_a$ and $\phi_E$ are not individually $T$-periodic, the linear combination 
\[
T_a\phi_E-T_E\phi_a=\{T,\phi\}_{a,E}
\]
lies in the $T$-periodic kernel of $A[\phi]$.  Similarly, we see that the functions $\{T,\phi\}_{a,c}$ and $\{T,\phi\}_{E,c}$ are both $T$-periodic and
satisfy
\begin{equation}\label{e:Jordan1}
A[\phi]\{T,\phi\}_{a,c}=-T_a\phi'\quad{\rm and}\quad A[\phi]\{T,\phi\}_{E,c}=-T_E\phi'.
\end{equation}
We now state the main result for this section.

\begin{theorem}\label{T:gker}
Let $\phi=\phi(\cdot;a,E,c)$ be a $T$-periodic solution of the profile equation \eqref{e:profile}, and assume the Jacobians $T_a$,
$\{T,M\}_{a,E}$ and $\{T,M,Q\}_{a,E,c}$ are non-zero.  Then $\lambda=0$ is an eigenvalue of the Bloch operator
$A_0[\phi]$ with algebraic multiplicity three and geometric multiplicity two.  In particular, defining 
\[
\begin{matrix}
\Phi_1 := \{T,M\}_{a,E}\phi ' &\hskip 5pt& \Phi_2 := \{T,\phi\}_{a,E} &\hskip 5pt& \Phi_3 :=  \{T,M,\phi\}_{a,E,c}\\
\Psi_1 :=  \eta &\hskip 5pt& \Psi_2 := 1 &\hskip 5pt& \Psi_3 := -\{T,M\}_{a,E}\Gadj\phi^{-2} - \{T,Q\}_{a,E},
\end{matrix}
\]  
where $\eta\in L^2_{\rm per}(0,T)$ is the unique odd function satisfying $A_0^\dag[\phi]\eta=-\Psi_3$,
we have that $\{\Phi_\ell\}_{\ell=1}^3$ and $\{\Psi_j\}_{j=1}^3$ provide a biorthogonal bases for the generalized kernels of
of $A_0[\phi]$ and $A_0^\dag[\phi]$, respectively.    In particular, we have $\langle \Psi_j, \Phi_\ell\rangle =0$ if and only if $j\neq \ell$.
Furthermore, the functions $\Phi_\ell$ and $\Psi_j$ satisfy the equations
\[
A_0[\phi]\Phi_1 = 0 = A_0[\phi]\Phi_2,~~A_0[\phi]\Phi_3 = -\Phi_1,
\]
and
\[
A_0^\dag[\phi] \Psi_2 = 0 = A_0^\dag[\phi] \Psi_3, ~~A_0^\dag[\phi] \Psi_1 = -\Psi_3.
\]
\end{theorem}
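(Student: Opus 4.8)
The plan is to realize both generalized kernels explicitly in terms of the parameter derivatives $\phi_a,\phi_E,\phi_c$ of the profile, using the identities \eqref{e:L_ident1} together with the Poisson-bracket bookkeeping introduced above, and then to extract the Jordan structure and the biorthogonality from a short list of pairings whose non-vanishing is controlled exactly by the three Jacobian hypotheses. First I would note that, on the periodic class, $A_0[\phi]v=0$ is equivalent to $\mathcal{L}[\phi]v=\mathrm{const}$, so $\ker A_0[\phi]$ is the $\mathcal{L}[\phi]$-preimage of the constants. Since $\langle\phi^{-3}\phi',1\rangle=0$ by periodicity, the constants lie in $\Range(\mathcal{L}[\phi])$ on $L^2_{\rm per}(0,T)$, while Lemma \ref{L:L_ker} (which uses $T_a\neq0$) gives $\dim\ker\mathcal{L}[\phi]=1$; hence $\ker A_0[\phi]$ is exactly two-dimensional. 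Its two generators are the odd function $\Phi_1=\{T,M\}_{a,E}\phi'$, nonzero because $\{T,M\}_{a,E}\neq0$, and the even function $\Phi_2=\{T,\phi\}_{a,E}$, which lies in the kernel because $A[\phi]$ annihilates $\phi_a$ and $\phi_E$ by \eqref{e:L_ident1}, and which is $T$-periodic because replacing $\phi$ by $T$ in the bracket $\{T,\phi\}_{a,E}$ yields a determinant with a repeated row. Their opposite parities give independence, and thus geometric multiplicity two.

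Next I would check that $\Phi_3=\{T,M,\phi\}_{a,E,c}$ is the missing generalized eigenvector. Expanding the determinant along its $\phi$-row and applying $A[\phi]\phi_a=A[\phi]\phi_E=0$ together with $A[\phi]\phi_c=-\phi'$ collapses $A_0[\phi]\Phi_3$ to $-\{T,M\}_{a,E}\phi'=-\Phi_1$. Periodicity of $\Phi_3$ follows from the same repeated-row mechanism: a combination $\alpha\phi_a+\beta\phi_E+\gamma\phi_c$ fails to be periodic by exactly $(\alpha T_a+\beta T_E+\gamma T_c)$ times the fixed endpoint vector $(\phi'(T),\phi''(T),\phi'''(T))$, and for $\Phi_3$ this coefficient is $\{T,M,T\}_{a,E,c}=0$. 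Since $\Phi_1\neq0$, this produces a genuine Jordan chain $\Phi_3\mapsto-\Phi_1\mapsto0$ above the kernel, so the algebraic multiplicity of $\lambda=0$ is at least three.

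To show the multiplicity is exactly three and to build the dual basis, I would invoke the Fredholm alternative with $\ker A_0^\dag[\phi]=\Span\{1,G^\dag[\phi]\phi^{-2}\}$ from Lemma \ref{L:L_ker}. A Leibniz-rule computation, converting $\int_0^T\phi_a$ and $\int_0^T\phi_E$ into $M_a,M_E$ and cancelling endpoint terms, gives $\langle\Psi_2,\Phi_2\rangle=\langle1,\Phi_2\rangle=\{T,M\}_{a,E}\neq0$, so $\Phi_2\notin\Range(A_0[\phi])$ and its block has size one. The size-two block over $\Phi_1$ is closed off by $\langle\Psi_3,\Phi_3\rangle\neq0$, with $\Psi_3=-\{T,M\}_{a,E}G^\dag[\phi]\phi^{-2}-\{T,Q\}_{a,E}\in\ker A_0^\dag[\phi]$, which forces $\Phi_3\notin\Range(A_0[\phi])$; together these give algebraic multiplicity three. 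I would then define $\Psi_1=\eta$ as the unique odd solution of $A_0^\dag[\phi]\eta=-\Psi_3$: existence needs $\langle\Psi_3,\Phi_1\rangle=\langle\Psi_3,\Phi_2\rangle=0$ (the first by parity, the second being precisely why the coefficient $\{T,Q\}_{a,E}$ was placed in $\Psi_3$), and uniqueness within odd functions follows because $A_0^\dag[\phi]$ reverses parity while its kernel is even. Most biorthogonality entries then vanish from the even/odd dichotomy ($\Phi_1,\phi'$ and $\eta$ odd; $\Phi_2,\Phi_3,\Psi_2,\Psi_3$ even), and passing $A_0^\dag[\phi]$ across the pairing gives $\langle\Psi_1,\Phi_1\rangle=\langle\Psi_3,\Phi_3\rangle$, so the two nontrivial diagonal pairings coincide.

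The crux, and the step I expect to be the main obstacle, is the evaluation of the weighted pairings $\langle\phi^{-2},G[\phi]\phi_s\rangle$ for $s\in\{a,E,c\}$. I expect each to equal $-Q_s$ up to an endpoint contribution proportional to $T_s$ times a single $s$-independent quantity; feeding this into $\langle\phi^{-2},G[\phi]\Phi_2\rangle$ and $\langle\phi^{-2},G[\phi]\Phi_3\rangle$ and expanding the brackets should then yield $-\{T,Q\}_{a,E}$ and $-\{T,M,Q\}_{a,E,c}$ respectively, with all endpoint terms recombining into the vanishing determinants $\{T,T\}_{a,E}$ and $\{T,M,T\}_{a,E,c}$. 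This identifies $\langle\Psi_3,\Phi_2\rangle=0$ and $\langle\Psi_3,\Phi_3\rangle=\{T,M\}_{a,E}\{T,M,Q\}_{a,E,c}$, so the hypotheses $\{T,M\}_{a,E}\neq0$ and $\{T,M,Q\}_{a,E,c}\neq0$ are exactly what make the diagonal pairings nonzero. The delicacy throughout is that $\phi_a,\phi_E,\phi_c$ are not periodic, so every integration by parts against $\phi^{-2}$ and every differentiation of the defining integral for $Q$ generates boundary terms; the real work is verifying that, because $\phi'(T)=0$ and the derivative jumps are all proportional to the same endpoint vector, these boundary contributions assemble into determinants with a repeated row and disappear.
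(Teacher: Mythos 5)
Your proposal is correct and takes essentially the same route as the paper's proof: the same Fredholm-alternative bookkeeping against $\ker A_0^\dag[\phi]={\rm span}\{1,G^\dag[\phi]\phi^{-2}\}$, the same parity arguments for $\eta$ and the off-diagonal pairings, and the same key computation that $\LA G^\dag[\phi]\phi^{-2},\phi_s\RA=-Q_s$ plus an $s$-independent multiple of $T_s$ (the paper's \eqref{e:Q_variation}), so that every pairing collapses to a bracket determinant via the repeated-row cancellation you describe. The only cosmetic difference is your dimension count for $\ker A_0[\phi]$ (constants lie in $\Range\,\mathcal{L}[\phi]$ because $\LA \phi^{-3}\phi',1\RA=0$), whereas the paper counts periodic solutions inside the three-dimensional solution space ${\rm span}\{\phi',\phi_a,\phi_E\}$ of $\partial_z\mathcal{L}[\phi]f=0$; both give geometric multiplicity exactly two.
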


\begin{proof}
Since $T_a\neq 0$ by assumption, the characterization of the kernel of $A_0^\dag[\phi]$ follows from Lemma \ref{L:L_ker}.  Further, note a function $f$ belongs to the $T$-periodic
kernel of $A_0[\phi]$ if and only if $f$ is $T$-periodic and either
$\mathcal{L}[\phi]f=0$ or $\mathcal{L}[\phi]f$ is a non-zero constant.  From \eqref{e:L_ident1}, it follows immediately
that 
\[
\ker_{L^2_{\rm per}(0,T)}\left(A_0[\phi]\right)={\rm span}\left\{\Phi_1,\Phi_2\right\}.
\]
Furthermore, $\phi'$ is in the range of $A_0[\phi]$  by \eqref{e:Jordan1}.  Hence, by the Fredholm alternative (or by parity), we have that $\LA \Psi_2, \Phi_1\RA = 0 = \LA \Psi_3, \Phi_1\RA$.  For the periodic element that lies in the Jordan chain above $\phi'$, we take a specific linear combination of $\{T,\phi\}_{a,c}$ and $\{T,\phi\}_{E,c}$, namely $\Phi_3$.  
Furthermore, by the Fredholm alternative the Jordan chain above $\phi'$ terminates at height one if and only if
\[
\Phi_3\notin\ker\left(A_0^\dag[\phi]\right)^\perp.
\]
Since $\Phi_3$ has zero mean by construction, we clearly have $\left<\Psi_2,\Phi_3\right>=0$ and hence the above condition is equivalent to showing 
\begin{align*}
\left<\Psi_3,\Phi_3\right>=-\{T,M\}_{a,E}\left<G^\dag[\phi]\phi^{-2},\{T,M,\phi\}_{a,E,c}\right>
\end{align*}
is non-zero.  To write the above in a more usable form, observe from the definition of $Q$ in \eqref{e:MQ} that
\[
Q_a=\int_0^T\left(\frac{2\phi\phi'\phi'_a-\phi\phi_a-2(\phi')^2\phi_a}{\phi^3}\right)dx+\frac{[\phi(T)+(\phi'(T))^2]T_a}{(\phi(T))^2}.
\]
Note by integration by parts that
\[
\int_0^T\left(\frac{\phi'}{\phi^2}\right)\phi'_a dx=-\int_0^T\left(\frac{\phi\phi''-2(\phi')^2}{\phi^3}\right)\phi_a~dx
\]
and hence
\begin{equation}\label{e:Q_variation}
\begin{aligned}
Q_a&=-\int_0^T\left(\frac{\phi+2\phi\phi''-2(\phi')^2}{\phi^3}\right)\phi_a~dx+\frac{[\phi(T)+(\phi'(T))^2]T_a}{(\phi(T))^2}\\
&=-\left<G^\dag[\phi]\phi^{-2},\phi_a\right>+\frac{[\phi(T)+(\phi'(T))^2]T_a}{(\phi(T))^2}.
\end{aligned}
\end{equation}
Similar expressions hold for $Q_E$ and $Q_c$ and hence we find
\[
\left<\Psi_3,\Phi_3\right>=\{T,M\}_{a,E}\{T,M,Q\}_{a,E,c},
\]
which is non-zero by hypotheses.  The proves our characterization of the generalized $T$-periodic kernel of the operator $A_0[\phi]$.

Finally, we consider the generalized kernel of the adjoint operator $A_0^\dag[\phi]$.  Following the method for calculating $\LA\Psi_3,\Phi_3\RA$ above,
we immediately find that $\LA \Psi_2,\Phi_2\RA = \{T,M\}_{a,E}$, which is assumed to be non-zero.  Hence, by the Fredholm alternative, $\Psi_2$ is not in the range of 
$A_0^\dag[\phi]$.  By similar arguments, we find
\[
\left<\Psi_3,\Phi_2\right>=-\{T,M\}_{a,E}\left(\left<G^\dag[\phi]\phi^{-2},\{T,\phi\}_{a,E}\right>+\{T,Q\}_{a,E}\right)=0,
\]
so that, again by the Fredholm alternative, $\Psi_3$ belongs in the range of $A_0^\dag[\phi]$.  Since $\Phi_3$ is even and $A_0[\phi]$ switches parity,
the fact that the kernel of $A_0[\phi]$ consists entirely of even functions implies there exists a unique $T$-periodic odd function $\eta$ that satisfies
\[
A_0^\dag[\phi]\eta=-\Psi_3.
\]
Furthermore, we note that $\Psi_1 = \eta$ is not in the range of $A_0^\dag[\phi]$ since
\begin{align*}
\left<\Psi_1,\Phi_1\right>&=-\left<\Psi_1,A_0[\phi]\Phi_3\right>=\left<\Psi_3,\Phi_3\right>=\{T,M\}_{a,E}\{T,M,Q\}_{a,E,c},
\end{align*}
which is non-zero by assumption.  This completes the characterization of the generalized kernel of $A_0^\dag[\phi]$.  To finish the proof, we note that $\LA \Psi_1, \Phi_2\RA = 0 = \LA \Psi_1, \Phi_3\RA$ by parity.
\end{proof}

We now make some important comments regarding the assumptions in Theorem \ref{T:gker}.  The above result was obtained through the observation that infinitesimal variations
along the manifold of periodic traveling wave solutions yield tangent vectors that lie in the generalized kernels.  Through our existence theory in Section \ref{S:exist}, this manifold
was parameterized by the wave speed $c$ and the integration constants $a$ and $E$.  While this parameterization is natural from the mathematical perspective, 
following directly from the Hamiltonian formulation \eqref{e:quad} of the profile equation \eqref{e:profile}, 
it is different than the parameteriztaion that naturally arises in Whitham theory.  Indeed, recall from  Section \ref{S:whithamderivation} that the Whitham modulation
system \eqref{e:whitham} describes the slow evolution of the wave number $k$ and conserved quantities $M$ and $Q$, thus yielding a parameterization
of the manifold of periodic traveling wave solutions of the conduit equation \eqref{e:conduit1} in terms of these physical quantities.
Consequently, a-priori these two approaches work with different parameterizations of the same manifold.

In order to make comparisons between these two approaches, it is natural to assume that we can smoothly change between these parameterizations.
Specifically, we require that the manifold of periodic traveling wave solutions of \eqref{e:conduit1} constructed
in Section \ref{S:exist} can be locally reparameterized in a $C^1$ manner by the wave number $k$ and the conserved quantities $M$ and $Q$, i.e. that the mapping
\[ 
\mathcal{B}\ni (a,E,c)\mapsto (k(a,E,c),M(a,E,c),Q(a,E,c))\in\RM^3
\]
is locally $C^1$-invertible  at each point.  By the Implicit Function Theorem, this is guaranteed by requiring that the Jacobian determinant
\[
\frac{\partial(k,M,Q)}{\partial(a,E,c)}=\{k,M,Q\}_{a,E,c}
\]
of the above map is non-singular at each point in $\mathcal{B}$.  Recalling that $k(a,E,c)=\frac{1}{T(a,E,c)}$ we see that
\[
\{k,M,Q\}_{a,E,c}=-\frac{1}{T^2}~\{T,M,Q\}_{a,E,c},
\]
it follows that such a $C^1$-reparameterization is possible provided $\{T,M,Q\}_{a,E,c}\neq 0$, which is one of the primary assumptions in Theorem \ref{T:gker}.  Likewise,
the requirement that $\{T,M\}_{a,E}\neq 0$ is equivalent to saying that waves with fixed wave speed can be locally reparameterized in a $C^1$ manner by the
wave number $k$ and mass $M$.  

With the above observations in mind, we now seek a restatement of Theorem \ref{T:gker} that is generated with respect to infinitesimal variations along the $k$, $M$, and $Q$ coordinates.
To see the dependence on the wave number $k$ explicitly, we begin by rescaling the spatial variable as $y=kx$ and note that $T$-periodic traveling wave solutions
of \eqref{e:conduit1} correspond to $1$-periodic traveling wave solutions (with the same period) of the rescaled evolution equation
\begin{equation}
u_t + k(u^2)_y - k^2(u^2(u^{-1}u_t)_y)_y =0\label{e:conduit4}
\end{equation}
where $k=1/T$.  After rescaling the traveling coordinate $\theta = kz = k (x-ct) = y-\o t$, where $\o = kc$ is temporal frequency, it is readily seen that traveling wave solutions of \eqref{e:conduit4} correspond to solutions of the form $u(y,t) = \phi(y-\o t)$.  Hence, $\phi(\theta)$ is a stationary $1$-periodic  solution of the evolutionary equation
\begin{equation}\label{e:travel_conduit2}
u_t - kc u_\theta + k(u^2)_\theta - k^2(u^2(u^{-1}(u_t - kcu_\theta)_\theta)_\theta =0
\end{equation}
%Following the procedure as before, 
The rescaled profile equation now reads
\begin{equation}\label{e:profile_rescaled}
2cE = c\phi - \phi^2 + k^2 c (\phi ')^2 - k^2 c \phi \phi '',
\end{equation}
where now $'$ denotes differentiation with respect to $\theta$.  Through this rescaling,
the $T$-periodic solutions $\phi$ of \eqref{e:profile} now correspond to $1$-periodic solutions of \eqref{e:profile_rescaled}
for some $k$.  Similarly, the conserved quantities evaluated on the manifold of $1$-periodic solutions of \eqref{e:profile_rescaled} take the form
\begin{equation}\label{e:MQ_rescaled}
\left\{\begin{aligned}
M&:= \int_0^1 \phi(\th;k,M,Q)\, d\th\\
Q &:= \int_0^1 \frac{\phi(\th;k,M,Q)+ k^2(\phi '(\th;k,M,Q))^2}{(\phi(\th;k,M,Q))^2}\, d\th
\end{aligned}\right.
\end{equation}
while the linearization of \eqref{e:travel_conduit2} now reads
\[
G[\phi] v_t = k\d_\th \mathcal{L}[\phi] v
\]
where, with a slight abuse of notation, $\G$ and $\L$ are defined as in \eqref{ops}, albeit with $k\d_\th$ replacing $\d_z$, i.e.
\begin{equation}\label{ops_rescaled}
\left\{\begin{aligned}
\G f &= f - k^2 (\phi^2(\phi^{-1} f)')' = f + k^2 \phi '' f - k^2 \phi f ''\\
\mathcal{L}[\phi]f&=cf - 2\phi f - k^2 c \phi '' f + 2k^2 c \phi ' f ' - k^2 c \phi f ''
\end{aligned}\right.
\end{equation}
Clearly, the rescaled operator $G[\phi]$ is invertible as before, leading us to the consideration of the spectral problem 
\begin{equation}\label{lin_rescaled}
A[\phi]v=\lambda v
\end{equation}
posed on $L^2(\RM)$, where now, again with a slight abuse of notation, $\A = \Ginv k\d_\th \L$ is a linear operator with $1$-periodic
coefficients and $G[\phi]$ and $\L$ are differential operators as in \eqref{ops_rescaled}.

By Theorem \ref{T:gker}, we know that $\lambda=0$ is a $1$-periodic eigenvalue of the rescaled operator $A[\phi]$ in \eqref{lin_rescaled} with algebraic multiplicity three and geometric
multiplicity two.  In fact, differentiating \eqref{e:profile_rescaled} with respect to $x$, $M$, and $Q$ yields
\[
A[\phi]\phi'=0,\quad A[\phi]\phi_M=-kc_M\phi',\quad A[\phi]\phi_Q=-kc_Q\phi'
\]
Observe that since the profiles $\phi(\cdot;k,M,Q)$ are always $1$-periodic by construction, it follows the functions $\phi_M$ and $\phi_Q$ are also $1$-periodic.
In particular, the functions $\phi'$ and $c_M\phi_Q-c_Q\phi_M$ are linearly independent and span the $1$-periodic  kernel of $A[\phi]$.
We also mention that, as in Lemma \ref{L:L_ker}, the functions $1$ and $G^\dag[\phi]\phi^{-2}$ span the $1$-periodic kernel of the rescaled operator $A^\dag[\phi]$.
Furthermore, using similar arguments as in \eqref{e:Q_variation} we see that
\[
\LA1,\phi_M\RA = 1 = -\LA G^\dag[\phi]\phi^{-2},\phi_Q\RA
\]
and
\[
\LA 1,\phi_Q\RA = 0 = \LA G^\dag[\phi]\phi^{-2},\phi_M\RA.
\]
We also have
\[
\LA 1, \phi ' \RA = 0 = \LA G^\dag[\phi]\phi^{-2},\phi'\RA
\]
by parity.  Consequently, there is a linear combination of the functions $1$ and $G^\dag[\phi]\phi^{-2}$ that is orthogonal to the $1$-periodic kernel of $A[\phi]$.  It follows
there is a $1$-periodic function in the generalized kernel of $A^\dag[\phi]$.  Letting $A_\xi[\phi]$ denote the Bloch operators associated with
$A[\phi]$, defined now for $\xi\in[-\pi,\pi)$, we have the following result.

\begin{corollary}\label{C:gker_rescaled}
Let $\phi=\phi(a,E,c)$ be a $T$-periodic solution of the profile equation \eqref{e:profile}, and assume that the Jacobian determinants $T_a$, $\{T,M\}_{a,E}$
and $\{T,M,Q\}_{a,E,c}$ are all non-zero.  Then $\lambda=0$ is an eigenvalue of $A_0[\phi]$ with algebraic multiplicity three and geometric multiplicity two.  
In particular, defining
\[
\begin{matrix}
\Phi_1^0 := \phi ' &\hskip 5pt& \Phi_2^0 := \phi_M &\hskip 5pt& \Phi_3^0 := \phi_Q\\
\Psi_1^0 := \beta &\hskip 5pt& \Psi_2^0 := 1 &\hskip 5pt& \Psi_3^0 := -\Gadj\phi^{-2},
\end{matrix}
\]  
where $\beta\in L^2_{\rm per}(0,1)$ is the unique odd function satisfying $A_0^\dag[\phi]\beta\in\ker\left(A_0^\dag[\phi]\right)$ and $\left<\beta,\Phi_1^0\right>=1$,
we have that $\{\Phi_\ell^0\}_{\ell=1}^3$ and $\{\Psi_j^0\}_{j=1}^3$ provide a basis of solutions for the generalized kernels of $A_0[\phi]$ and $A_0^\dag[\phi]$, respectively.
In particular, we have $\LA \Psi_j^0,  \Phi_\ell^0 \RA = \delta_{j\ell}$ and the $\Phi_\ell^0$ and $\Psi_j^0$ satisfy the equations
\[
A_0[\phi]\Phi_1^0 = 0,~~  A_0[\phi]\Phi_2^0 = -kc_M \Phi_1^0,~~ A_0[\phi]\Phi_3^0 = -kc_Q \Phi_1^0
\]
and
\[
%\Aadj\Psi_2^0 =\Aadj\Psi_3^0 = 0, ~~\Aadj\Psi_1^0 = b_2 \Psi_2^0 + b_3\Psi_3^0
A_0^\dag[\phi]\Psi_2^0 =0=A_0^\dag[\phi]\Psi_3^0, ~~A_0^\dag[\phi]\Psi_1^0 \in{\rm span}\{\Psi_2^0,\Psi_3^0\}\setminus\{0\}.
\]
\end{corollary}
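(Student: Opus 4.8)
The plan is to deduce the corollary directly from Theorem \ref{T:gker} by transporting its conclusions from the $(a,E,c)$-parameterization into the $(k,M,Q)$-parameterization. This is legitimate precisely because the hypotheses $T_a\neq 0$, $\{T,M\}_{a,E}\neq 0$, and $\{T,M,Q\}_{a,E,c}\neq 0$ guarantee, via the discussion preceding the corollary and the Implicit Function Theorem, that the two parameterizations are related by a local $C^1$-diffeomorphism. Since passing to the rescaled coordinate $y=kx$ sends $\lambda=0$ to $\lambda=0$ and preserves the structure of the generalized kernel at the origin, Theorem \ref{T:gker} immediately yields that $\lambda=0$ is an eigenvalue of the rescaled operator $A_0[\phi]$ with algebraic multiplicity three and geometric multiplicity two. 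It then remains only to verify that the explicitly named functions $\Phi_\ell^0$ and $\Psi_j^0$ furnish the claimed biorthogonal bases and Jordan relations.

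For the right generalized kernel I would simply invoke the identities obtained by differentiating the rescaled profile equation \eqref{e:profile_rescaled}, namely $A_0[\phi]\phi'=0$, $A_0[\phi]\phi_M=-kc_M\phi'$, and $A_0[\phi]\phi_Q=-kc_Q\phi'$, which are exactly the asserted chain relations for $\Phi_1^0,\Phi_2^0,\Phi_3^0$. Because the profiles $\phi(\cdot;k,M,Q)$ are $1$-periodic by construction, $\phi_M$ and $\phi_Q$ are automatically $1$-periodic; and the chain rule applied to the nondegenerate map $(a,E,c)\mapsto(k,M,Q)$, together with the oddness of $\phi'$ against the evenness of $\phi_M,\phi_Q$ and $\{T,M,Q\}_{a,E,c}\neq 0$, shows that $\{\phi',\phi_M,\phi_Q\}$ is linearly independent. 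These three functions therefore span the three-dimensional generalized kernel. Note also that geometric multiplicity two forces $(c_M,c_Q)\neq(0,0)$, which is what makes $\phi_M,\phi_Q$ genuine height-one elements rather than kernel elements.

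On the adjoint side I would first recall, as noted before the corollary, that $\ker_{L^2_{\rm per}(0,1)}(A_0^\dag[\phi])=\mathrm{span}\{1,\Gadj\phi^{-2}\}$, which supplies $\Psi_2^0$ and $\Psi_3^0$. To produce the height-two element $\Psi_1^0=\beta$ I would apply the Fredholm alternative: the equation $A_0^\dag[\phi]\beta=\psi$ is solvable for $1$-periodic $\beta$ exactly when $\psi\perp\ker(A_0[\phi])=\mathrm{span}\{\phi',\,c_M\phi_Q-c_Q\phi_M\}$, and I want $\psi\in\ker(A_0^\dag[\phi])$. Imposing both conditions and using the pairings $\langle 1,\phi_M\rangle=1$, $\langle \Gadj\phi^{-2},\phi_Q\rangle=-1$ recorded before the corollary (the others vanishing), the even combination $\psi\propto c_M\cdot 1-c_Q\,\Gadj\phi^{-2}$ is singled out up to scale and is nonzero since $(c_M,c_Q)\neq(0,0)$. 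Because $A_0^\dag[\phi]$ reverses parity and $\psi$ is even, splitting any particular solution into its parities shows there is a unique odd solution $\beta$; the normalization $\langle\beta,\Phi_1^0\rangle=1$ then fixes it, which is possible because the relevant pairing is nonzero—this being the rescaled analogue of the computation $\langle\Psi_1,\Phi_1\rangle=\{T,M\}_{a,E}\{T,M,Q\}_{a,E,c}\neq 0$ from Theorem \ref{T:gker}. In particular $A_0^\dag[\phi]\beta=\psi\in\ker(A_0^\dag[\phi])\setminus\{0\}$, as claimed.

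Finally I would verify the biorthogonality $\langle\Psi_j^0,\Phi_\ell^0\rangle=\delta_{j\ell}$ by direct inspection: the diagonal entries $\langle\Psi_2^0,\Phi_2^0\rangle=\langle 1,\phi_M\rangle=1$ and $\langle\Psi_3^0,\Phi_3^0\rangle=-\langle\Gadj\phi^{-2},\phi_Q\rangle=1$ come from the listed pairings, while $\langle\Psi_1^0,\Phi_1^0\rangle=1$ is the normalization; the off-diagonal entries vanish either by parity (each surviving pairing matches an odd function against an even one) or by the remaining recorded pairings $\langle 1,\phi_Q\rangle=0=\langle\Gadj\phi^{-2},\phi_M\rangle$. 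The principal obstacle is the construction and characterization of $\beta$—establishing its existence, oddness, uniqueness, and normalizability through the Fredholm alternative and the parity structure of $A_0^\dag[\phi]$; once $\beta$ is in hand, the remaining verifications reduce to routine parity and pairing computations already packaged in the identities stated before the corollary.
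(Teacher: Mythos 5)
Your proposal is correct and takes essentially the same route as the paper, which proves Corollary \ref{C:gker_rescaled} via the discussion immediately preceding it: rescaling to transport the multiplicity count of Theorem \ref{T:gker}, differentiating the rescaled profile equation \eqref{e:profile_rescaled} in $\theta$, $M$, and $Q$ to get the Jordan relations, invoking the recorded pairing identities, and a Fredholm/parity argument producing the odd height-two adjoint element. Your write-up simply fills in details the paper leaves implicit, namely the explicit solvability computation singling out $c_M\Psi_2^0+c_Q\Psi_3^0$ as the image $A_0^\dag[\phi]\beta$, the uniqueness of $\beta$ by parity splitting, and the entry-by-entry biorthogonality check.
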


Before continuing, we note for future use that the function $\phi_k$ is also $1$-periodic and satisfies the equation
\begin{equation}\label{e:phi_k_identity1}
A[\phi]\phi_k=-kc_k\phi'-2k^2cG^{-1}[\phi]\left((\phi')^2-\phi\phi''\right)'.
\end{equation}
Furthermore, differentiating \eqref{e:MQ_rescaled} with respect to $k$, while holding $M$ and $Q$ constant, yields the important relations
\begin{equation}\label{e:phi_k_identity2}
\left<1,\phi_k\right>=0\quad{\rm and}\quad
\LA \Gadj\phi^{-2}, \phi_k\RA = 2k \LA \phi^{-2}, (\phi ')^2\RA.
\end{equation}

\subsection{Modulational Stability Calculation}

Now that we have constructed a basis for the generalized kernels of $A_0[\phi]$ and its adjoint in a coordinate system compatible with the Whitham system \eqref{e:whitham},
we proceed
to study how this triple eigenvalue bifurcates from the $(\lambda,\xi)=(0,0)$ state.
To this end, recall from \eqref{bloch_spec} that this is equivalent to seeking the $1$-periodic eigenvalues in a neighborhood of the origin of the associated Bloch operators
\[
A_\xi[\phi]=G_\xi^{-1}[\phi]k(\partial_\theta+i\xi)\mathcal{L}_\xi[\phi]
\]
for $|\xi|\ll 1$, where here $G_\xi[\phi] := e^{-i\xi\th}\G e^{i\xi\th}$ and $\mathcal{L}_\xi[\phi] := e^{-i\xi\th}\mathcal{L}[\phi] e^{i\xi\th}$
are the Bloch operators associated with the operators $G[\phi]$ and $\mathcal{L}[\phi]$ defined in \eqref{ops_rescaled}.
Following \cite{BrHJ16,BrJZ,BNR14}, we begin expanding the Bloch operators for $|\xi|\ll 1$.  
As these expressions are analytic
in $\xi$, it is straightforward to verify that
\begin{align*}
\mathcal{L}_\xi[\phi]=L_0+(ik\xi) L_1+(ik\xi)^2L_2~~{\rm and}~~G_\xi[\phi]=\widetilde{G}_0+(ik\xi)\widetilde{G}_1+(ik\xi)^2\widetilde{G}_2,
\end{align*}
where here
\[
L_0:=\mathcal{L}[\phi],~~L_1:=2kc\left(\phi'-\phi\partial_\theta\right),~~~~L_2:=-c\phi 
\]
and
\[
\widetilde G_0  := \G , ~~~~ \widetilde G_1 := -2\phi k\d_\th , ~~~~ \widetilde G_2  := -\phi .
\]
are operators acting on $L^2_{\rm per}(0,1)$.
Using that $\widetilde G_0[\phi]$ is invertible on $L^2_{\rm per}(0,1)$ and rewriting the expansion for $G_\xi[\phi]$ as
\[
G_\xi[\phi]=\left(I+(ik\xi)\widetilde{G}_1\widetilde{G}_0^{-1}+(ik\xi)^2\widetilde{G}_2\widetilde{G}_0^{-1}\right) \widetilde{G}_0,
\]
it follows $G_\xi^{-1}[\phi]$ can be expanded for $|\xi|\ll 1$ as the Neumann series
\begin{align*}
G_\xi^{-1}[\phi]&=\widetilde G_0^{-1}\sum_{\ell=0}^\infty (-1)^\ell[((ik\xi)\widetilde G_1 + (ik\xi)^2\widetilde G_2)\widetilde G_0^{-1}]^\ell\\
&= \mathcal{G}_0 + (ik\xi)\mathcal{G}_1 + (ik\xi)^2 \mathcal{G}_2 + \mathcal O(|\xi|^3),
\end{align*}
where here
\[
\mathcal{G}_0= G^{-1}[\phi],~~~\mathcal{G}_1 = 2\Ginv(\phi k\d_\th(\Ginv\, \cdot\,))
\]
and
\[
\mathcal{G}_2 = \Ginv(\phi\Ginv\, \cdot\,) + 4\Ginv(\phi k\d_\th(\Ginv(\phi k\d_\th(\Ginv \,\cdot\,))))
\]
are again acting on $L^2_{\rm per}(0,1)$.
The Bloch operators $A_\xi[\phi]$ can thus be expanded for $|\xi|\ll 1$ as
\begin{align*}
A_\xi[\phi] &= (\mathcal{G}_0 + (ik\xi)\mathcal{G}_1 + (ik\xi)^2\mathcal{G}_2 + \mathcal O(|\xi|^3))(k\d_\th + ik\xi)(L_0 + (ik\xi) L_1+(ik\xi)^2 L_2)\\
&= A_0 + (ik\xi) A^{(1)} + (ik\xi)^2 A^{(2)} + \mathcal O(|\xi|^3)
\end{align*}
where, after some manipulation, 
\begin{equation}\label{e:A_expand}
\left\{\begin{aligned}
A_0 &=A_0[\phi],\\
 A^{(1)} &= \mathcal{G}_1 k\d_\th L_0 + \mathcal{G}_0 k\d_\th L_1 + \mathcal{G}_0 L_0\\
&= \mathcal{G}_0 \left( 2 \phi k \d_\th A_0 + k \d_\th L_1 + L_0\right),\\
A^{(2)} &= \mathcal{G}_2 k\d_\th L_0 + \mathcal{G}_1 k\d_\th L_1 + \mathcal{G}_0 k\d_\th L_2 + \mathcal{G}_1 L_0 + \mathcal{G}_0 L_1\\
&= \mathcal{G}_0 \left(\phi A_0 + 4 \phi k\d_\th \mathcal{G}_0 \phi k\d_\th A_0 + 2 \phi k \d_\th \mathcal{G}_0 k\d_\th L_1 + k\d_\th L_2 + 2 \phi k \d_\th \mathcal{G}_0 L_0 + L_1\right).
\end{aligned}
\right.
\end{equation}
Note to assist in our computations of the actions of $A^{(1)}$ and $A^{(2)}$ later, we have expanded these operators in \eqref{e:A_expand} as to identify any factors of $A_0$ 
present in them, as well as to pull out a global factor of $\mathcal{G}_0$.

Now, by Corollary \ref{C:gker_rescaled}, we know $\lambda=0$ is an isolated eigenvalue of $A_0[\phi]$ with algebraic multiplicity three. Since $A_\xi[\phi]$ is a relatively compact perturbation
of $A_0[\phi]$ for all $|\xi|\ll 1$ depending analytically on the Bloch parameter $\xi$, it follows that the operator $A_\xi[\phi]$ will have three eigenvalues $\{\lambda_j(\xi)\}_{j=1}^3$,
defined for $|\xi|\ll 1$, bifurcating from $\lambda=0$ for $0<|\xi|\ll 1$.  The modulational stability or instability of $\phi$ may then be determined by tracking
these three eigenvalues for $|\xi|\ll 1$.  To this end, observe we may use the bases identified in Corollary \ref{C:gker_rescaled} to build explicit rank $3$ spectral
projections onto the generalized kernels of $A_0[\phi]$ and $A_0^\dag[\phi]$.  By standard spectral perturbation theory (see, for example, 
Theorems 1.7 and 1.8 in \cite[Chapter VII.1.3]{K76}) 
these dual bases extend analytically into dual right and left bases $\{\Phi_\ell^\xi\}_{\ell=1}^3$ and $\{\Psi_j^\xi\}_{j=1}^3$ associated to the 
three eigenvalues $\{\lambda_j(\xi)\}_{j=1}^3$ near the origin that satisfy $\langle\Psi_j^\xi,\Phi_\ell^\xi\rangle=\delta_{j\ell}$ for all $|\xi|\ll 1$.
For $|\xi|\ll 1$, we may now construct $\xi$-dependent rank 3 eigenprojections 
\[
\Pi(\xi): L^2_{\rm per}(0,2\pi)\to\bigoplus_{j=1}^3{\rm ker}\left(A_\xi[\phi]-\lambda_j(\xi)I\right),~~
	\widetilde\Pi(\xi): L^2_{\rm per}(0,2\pi)\to\bigoplus_{j=1}^3{\rm ker}\left(A^\dag_\xi[\phi]-\overline{\lambda_j(\xi)}I\right),
\]
with ranges coinciding with the total left and right eigenspaces associated with the eigenvalues $\{\lambda_j(\xi)\}_{j=1}^3$.
Using this one-parameter family of eigenprojections, for each fixed $|\xi|\ll 1$ we can project the infinite dimensional
spectral problem for $A_\xi[\phi]$ onto the three-dimensional total eigenspace associated with the eigenvalues $\{\lambda_j(\xi)\}_{j=1}^3$.
In particular, the action of the operators $A_\xi[\phi]$ on this subspace can be represented by the $3\times 3$ matrix operator
\[
D_\xi:=\widetilde{\Pi}(\xi)A_\xi[\phi]\Pi(\xi)=\left(\LA\Psi_j^\xi,A_\xi[\phi]\Phi_\ell^\xi\RA\right)_{j,\ell=1}^3.
\]
It follows that for each $|\xi|\ll 1$ the eigenvalues $\lambda_j(\xi)$ correspond precisely to the values $\lambda$ where the matrix
$D_\xi-\lambda I$ is singular, where here $I$ denotes the identity matrix\footnote{Here, we are using that $\widetilde{\Pi}(\xi)\Pi(\xi)$ is the identity by construction.} on 
$\RM^3$.  In what follows, we aim to explicitly construct this matrix for $|\xi|\ll 1$.

First, we observe that Corollary \ref{C:gker_rescaled} implies that $D_\xi$ at $\xi=0$ is given by\footnote{The horizontal and vertical lines are included only to organize the $3\times 3$ matrices
 into sub-blocks.}
\[
D_0 = \left(\begin{array}{c|cc} 0 & -kc_M & -kc_Q \\ \hline  0 & 0 & 0 \\ 0 & 0 & 0 \end{array}\right).
\]
Expanding the right and left bases $\Phi_\ell^\xi$ and $\Psi_j^\xi$ as\footnote{Note the expansion in $ik\xi$, rather than simply $i\xi$, is natural due to our spatial rescaling.  Further, it is
consistent with how $A_\xi[\phi]$ naturally expands in the rescaled system.}
\[
\Phi_\ell^\xi = \Phi_\ell^0 + (ik\xi)\left(\tfrac{1}{ik}\d_\xi\left.\Phi_\ell^\xi\right|_{\xi=0}\right) + (ik\xi)^2\left(\tfrac{1}{(ik)^2}\d_\xi^2\left.\Phi_\ell^\xi\right|_{\xi=0}\right) + \mathcal O(|\xi|^3)
\]
and 
\[
\Psi_j^\xi = \Psi_j^0 + (ik\xi)\left(\tfrac{1}{ik}\d_\xi\left.\Psi_j^\xi\right|_{\xi=0}\right) + (ik\xi)^2\left(\tfrac{1}{(ik)^2}\d_\xi^2\left.\Psi_j^\xi\right|_{\xi=0}\right) + \mathcal O(|\xi|^3),
\]
yields an expansion of the matrix $D_\xi$ %and $I_\xi$ 
of the form 
\[
D_\xi=D_0+(ik\xi)D^{(1)}+(ik\xi)^2D^{(2)}+\mathcal{O}(|\xi|^3).  %~~I_\xi=I_0+(ik\xi)I^{(1)}+(ik\xi)^2 I^{(2)}+\mathcal{O}(\xi^3).
\]
Note that, explicitly,
\[
D^{(1)} = \left(\LA \Psi_j^0, A_0 \tfrac{1}{ik}\d_\xi\left.\Phi_\ell^\xi\right|_{\xi=0} + A^{(1)} \Phi_\ell^0\RA + \LA \tfrac{1}{ik}\d_\xi\left.\Psi_j^\xi\right|_{\xi=0}, A_0 \Phi_\ell^0\RA\right)_{j,\ell=1}^3.
\]

To continue, we need information regarding $\d_\xi\Phi_1^\xi$ at $\xi=0$.  We claim that, up to harmless modifications of the basis functions used above, we can arrange
for this first order variation to be  exactly $\phi_k$, while simultaneously preserving biorthogonality of the bases up to $\mathcal O(|\xi|^2)$.
Indeed, observe that by differentiating the identity
$\Pi(\xi)A_\xi[\phi]\Phi_1^\xi=A_\xi[\phi]\Phi_1^\xi$ and evaluating at $\xi=0$ yields the relation
\[
\Pi(0)\left(A_0\left(\tfrac{1}{ik}\d_\xi\left.\Phi_1^\xi\right|_{\xi=0}\right) + A^{(1)}\Phi_1^0\right) + \tfrac{1}{ik}\d_\xi\left.\Pi(\xi)\right|_{\xi=0}A_0\Phi_1^0 = A_0\left(\tfrac{1}{ik}\d_\xi\left.\Phi_1^\xi\right|_{\xi=0}\right) + A^{(1)}\Phi_1^0.
\]
Recalling $A_0\Phi_1^0=0$, and noting that \eqref{e:phi_k_identity1} can be rewritten as $A^{(1)}\Phi_1^0 = - A_0 \phi_k - kc_k \Phi_1^0$, we find that
\[
\Pi(0)\left(A_0\left(\tfrac{1}{ik}\d_\xi\left.\Phi_1^\xi\right|_{\xi=0} - \phi_k\right)\right) = A_0\left(\tfrac{1}{ik}\d_\xi\left.\Phi_1^\xi\right|_{\xi=0} - \phi_k\right)
\]
which implies that $\frac{1}{ik}\d_\xi\left.\Phi_1^\xi\right|_{\xi=0}-\phi_k$ lies in the generalized kernel of $A_0[\phi]$.  Consequently, there exists constants $\{a_j\}_{j=1}^3$ 
such that
\[
\tfrac{1}{ik}\d_\xi\left.\Phi_1^\xi\right|_{\xi=0}=\phi_k+\sum_{j=1}^3a_j\Phi_j^0.
\]
Replacing $\Phi_1^\xi$ with
\[
\widetilde{\Phi}_1^\xi:=\Phi_1^\xi-(ik\xi)\sum_{j=1}^3a_j\Phi_j^0
\]
while simultaneously replacing $\Psi_j^\xi$ for $j=1,2,3$ with
\[
\widetilde{\Psi}_j^\xi:=\Psi_j^\xi+(ik\xi)a_j\Psi_1^\xi,
\]
we readily see that
\begin{equation}\label{e:ident_expand}
\widetilde{\Phi}_1^\xi=\Phi_1^0+(ik\xi)\phi_k+\mathcal{O}(|\xi|^2),~~I_\xi=\widetilde\Pi(\xi)\Pi(\xi)=I+\mathcal{O}(|\xi|^2),
\end{equation}
as claimed.  Note that $I_\xi$ is the $3\times 3$ matrix describing the action of the identity operator with respect to the modified bases.  For notational
simplicity, we will drop the tildes throughout the remainder and refer to these modified bases as simply $\{\Phi_\ell^\xi\}_{\ell=1}^3$ and  $\{\Psi_j^\xi\}_{j=1}^3$
With the above choices, the terms involving the variations in $\Psi_j^\xi$ can be directly computed.  For example, using Corollary \ref{C:gker_rescaled} we have
\[
 \LA \tfrac{1}{ik}\d_\xi\left.\Psi_j^\xi\right|_{\xi=0}, A_0 \Phi_2^0\RA=-kc_M \LA \tfrac{1}{ik}\d_\xi\left.\Psi_j^\xi\right|_{\xi=0}, \Phi_1^0\RA
 =kc_M \LA \Psi_j^\xi, \tfrac{1}{ik}\d_\xi\left.\Phi_1^0\right|_{\xi=0}\RA=kc_M\left<\Psi_j^0,\phi_k\right>,
\]
where the second equality follows since \eqref{e:ident_expand}(ii) implies
\[
0=\partial_\xi\left.\LA\Psi_j^\xi,\Phi_1^\xi\RA\right|_{\xi=0}=
\LA \d_\xi\left.\Psi_j^\xi\right|_{\xi=0}, \Phi_1^0\RA + \LA \Psi_j^\xi, \d_\xi\left.\Phi_1^0\right|_{\xi=0}\RA.
\]

Using the above modified bases, straight forward calculations yield
\[
D^{(1)} = \left(\begin{array}{c|cc} 
-kc_k & * & * \\ 
\hline  0 & \LA\Psi_2^0, A^{(1)} \Phi_2^0\RA  & \LA\Psi_2^0, A^{(1)} \Phi_3^0\RA \\ 
0 & \LA\Psi_3^0, A^{(1)} \Phi_2^0 + kc_M\phi_k\RA & \LA\Psi_3^0, A^{(1)} \Phi_3^0 + kc_Q\phi_k\RA\end{array}\right),
\]
where here ``$*$" is used to denote undetermined terms that, as we will see, are irrelevant to our calculation.  Similarly, one finds 
\[
D^{(2)} = \left(\begin{array}{c|cc} 
* & * & * \\ 
\hline \LA\Psi_2^0, A^{(2)} \Phi_1^0 + A^{(1)} \phi_k\RA  & * & * \\ 
\LA\Psi_3^0, A^{(2)} \Phi_1^0 + A^{(1)}\phi_k + kc_k\phi_k\RA & * & * \end{array}\right)
\]
Noticing the above implies the matrix $D_\xi$ satisfies
\[
D_\xi = \left(\begin{array}{c|ccc} \mathcal O(|\xi|) & & \mathcal O(1) & \\ \hline \\ \mathcal O(|\xi|^2) & & \mathcal O(|\xi|) & \\ \\ \end{array}\right),
\]
where the upper left block is $1\times 1$, 
it follows by standard arguments that the eigenvalues $\lambda_j(\xi)$ are at least $C^1$ in $\xi$, and can thus be written as $\lambda_j(\xi)=ik\xi\mu_j(\xi)$ for some continuous functions
$\mu_j$ defined for $|\xi|\ll 1$.  Further, introducing, for $0<|\xi|\ll 1$, the invertible 
matrix\footnote{Note that the conjugating by $S(\xi)$ effectively replaces the coefficient of the 
$\phi'$, corresponding to the local phase $\psi$, with the wave number $|\xi|\psi\sim\psi_x$.  This is reminiscent of the fact that the Whitham system \eqref{e:whitham} 
involves the wave number $\psi_x$, rather than the phase $\psi$.}
\[
S(\xi) := \left(\begin{array}{c|cc} ik\xi & 0 & 0 \\ \hline 0 & 1 & 0 \\ 0 & 0 & 1 \end{array}\right)
\]
and defining $\widehat D_\xi := \frac{1}{ik\xi}S(\xi) D_\xi S(\xi)^{-1}$ and $\widehat I_\xi := S(\xi)I_\xi S(\xi)^{-1}$, it follows $\widehat{D}_\xi$ and $\widehat{I}_\xi$ are analytic in $ik\xi$
and, at $\xi=0$, are given by $\widehat I_0 =I$ and
\begin{equation}\label{D0}
\widehat{D}_0 = 
\left(\begin{array}{ccc}
-kc_k & -kc_M & - kc_Q\\
\LA\Psi_2^0, A^{(2)} \Phi_1^0 + A^{(1)} \phi_k\RA &  \LA\Psi_2^0, A^{(1)} \Phi_2^0\RA &  \LA\Psi_2^0, A^{(1)} \Phi_3^0\RA \\
\LA\Psi_3^0, A^{(2)} \Phi_1^0 + A^{(1)}\phi_k + kc_k\phi_k\RA & \LA\Psi_3^0, A^{(1)} \Phi_2^0 + kc_M\phi_k\RA & \LA\Psi_3^0, A^{(1)} \Phi_3^0 + kc_Q\phi_k\RA
\end{array}\right)
\end{equation}
Furthermore, the $\mu_j(\xi)$ are the eigenvalues of the matrix $\widehat{D}_\xi$ since clearly
\begin{align*}
\det(D_\xi - \l(\xi)I_\xi) =(ik\xi)^3 \det(\widehat D_\xi - \mu(\xi)\widehat I_\xi)
\end{align*}

In summary, we have proven the following result.

\begin{theorem}\label{T:rigorous_MI}
Under the hypotheses of Corollary \ref{C:gker_rescaled},  in a sufficiently small neighborhood of the origin, the spectrum
of $A[\phi]$ on $L^2(\RM)$ consists of precisely three $C^1$ curves $\{\lambda_j(\xi)\}_{j=1}^3$ defined for $|\xi|\ll 1$ which can be expanded as
\[
\lambda_j(\xi)=ik\xi\mu_j(0)+o(|\xi|),~~j=1,2,3
\]
where the $\mu_j(0)$ are precisely the eigenvalues of the matrix $\widehat{D}_0$ above.  In particular, a necessary condition for $\phi$ to be a spectrally
stable solution of \eqref{e:conduit1} is that all the eigenvalues of $\widehat{D}_0$ are real.
\end{theorem}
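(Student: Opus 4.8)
The plan is to organize the construction carried out above into a bifurcation argument whose only delicate point is explaining why the triple eigenvalue at $(\lambda,\xi)=(0,0)$ splits into genuine $C^1$ curves. By Corollary \ref{C:gker_rescaled} this eigenvalue is defective (algebraic multiplicity three, geometric multiplicity two), and a generic analytic perturbation of a defective eigenvalue produces Puiseux (fractional-power) expansions rather than differentiable branches; the whole point will be that the specific $\xi$-scaling of the entries of the reduced matrix $D_\xi$ prevents this.

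First I would invoke Corollary \ref{C:gker_rescaled} to fix $\lambda=0$ as an isolated eigenvalue of $A_0[\phi]$ of algebraic multiplicity three, with biorthogonal bases $\{\Phi_\ell^0\}$, $\{\Psi_j^0\}$ of the generalized kernels of $A_0[\phi]$ and $A_0^\dag[\phi]$. Since the Bloch family $A_\xi[\phi]$ is a relatively compact, polynomial (hence analytic) perturbation of $A_0[\phi]$ in $\xi$, Kato's analytic perturbation theory (Theorems 1.7--1.8 of \cite[Ch.~VII.1.3]{K76}) guarantees that the total eigenprojection $\Pi(\xi)$ onto the three eigenvalues bifurcating from the origin, together with its adjoint $\widetilde\Pi(\xi)$, is analytic for $|\xi|\ll1$. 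Consequently the reduced $3\times3$ matrix $D_\xi=\widetilde\Pi(\xi)A_\xi[\phi]\Pi(\xi)$ and its identity counterpart $I_\xi=\widetilde\Pi(\xi)\Pi(\xi)$ are analytic in $\xi$, and the bifurcating eigenvalues $\lambda_j(\xi)$ are exactly the roots of $\det(D_\xi-\lambda I_\xi)=0$.

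The decisive structural input is the order of vanishing of the blocks of $D_\xi$. From the expansion $D_\xi=D_0+(ik\xi)D^{(1)}+(ik\xi)^2D^{(2)}+\mathcal O(|\xi|^3)$ one reads off that the $(1,1)$ entry is $\mathcal O(|\xi|)$, the upper-right $1\times2$ block is $\mathcal O(1)$, and the lower-right $2\times2$ block is $\mathcal O(|\xi|)$; crucially, the lower-left $2\times1$ block is $\mathcal O(|\xi|^2)$, because the corresponding entries of $D^{(1)}$ vanish. I would stress that this vanishing is not accidental: rewriting \eqref{e:phi_k_identity1} as $A^{(1)}\Phi_1^0=-A_0\phi_k-kc_k\Phi_1^0$ and using biorthogonality forces the first column of $D^{(1)}$ to reduce to $(-kc_k,0,0)^{\!\top}$. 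With these orders secured, conjugating by the diagonal scaling $S(\xi)=\diag(ik\xi,1,1)$ and setting $\widehat D_\xi:=\tfrac1{ik\xi}S(\xi)D_\xi S(\xi)^{-1}$, $\widehat I_\xi:=S(\xi)I_\xi S(\xi)^{-1}$, each entry's power of $ik\xi$ is exactly absorbed by the scaling, so $\widehat D_\xi$ extends analytically through $\xi=0$ with limit the explicit matrix $\widehat D_0$ of \eqref{D0}; from \eqref{e:ident_expand} one has $I_\xi=I+\mathcal O(|\xi|^2)$ and hence $\widehat I_0=I$.

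The conclusion is then immediate. Writing $\lambda=ik\xi\mu$ and using multilinearity of the determinant gives $\det(D_\xi-\lambda I_\xi)=(ik\xi)^3\det(\widehat D_\xi-\mu\widehat I_\xi)$, so for $0<|\xi|\ll1$ the bifurcating eigenvalues are $\lambda_j(\xi)=ik\xi\,\mu_j(\xi)$, where the $\mu_j(\xi)$ are the (generalized) eigenvalues of the analytic pencil $(\widehat D_\xi,\widehat I_\xi)$. These depend continuously on $\xi$ and, since $\widehat I_0=I$, satisfy $\mu_j(0)\in\sigma(\widehat D_0)$; this yields the expansion $\lambda_j(\xi)=ik\xi\,\mu_j(0)+o(|\xi|)$ and the stated $C^1$ regularity. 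For the stability criterion I would simply note $\mathrm{Re}\,\lambda_j(\xi)=-k\xi\,\mathrm{Im}\,\mu_j(0)+o(|\xi|)$: if some $\mu_j(0)$ had nonzero imaginary part this would be positive for one sign of $\xi$, violating spectral stability, so reality of all eigenvalues of $\widehat D_0$ is necessary. As indicated, the main obstacle throughout is the $C^1$ splitting of the defective triple eigenvalue, and the argument hinges entirely on verifying that the $\mathcal O(|\xi|^2)$ vanishing of the lower-left block is genuine so that the rescaling $S(\xi)$ regularizes the pencil.
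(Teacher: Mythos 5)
Your proposal is correct and follows essentially the same route as the paper: Kato's analytic perturbation theory for the total eigenprojection, the reduced matrix $D_\xi$ built from the (modified) biorthogonal bases of Corollary \ref{C:gker_rescaled}, the key $\mathcal{O}(|\xi|^2)$ vanishing of the lower-left block via \eqref{e:phi_k_identity1}, the rescaling by $S(\xi)$, and the determinant identity $\det(D_\xi-\lambda I_\xi)=(ik\xi)^3\det(\widehat D_\xi-\mu\widehat I_\xi)$. Your added emphasis on why the defective triple eigenvalue splits into genuine $C^1$ branches rather than Puiseux expansions is precisely the point of the paper's construction, not a departure from it.
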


Note the possible spectral instability predicted from Theorem \ref{T:rigorous_MI} is of modulational type, occurring near the origin in the spectral plane for side-band Bloch frequencies. 
In general, computing the eigenvalues of $\widehat{D}_0$ is a difficult task, requiring one to identify the above inner products in terms of known quantities:  see, for example, \cite{BrJK11,BrHJ16}.  
As we will see, however, this identification is not necessary in order to rigorously connect modulational instability of $\phi$ to the Whitham modulation system \eqref{e:whitham}.
Indeed, recalling the quasilinear form \eqref{e:whitham_quasilinear} of the Whitham modulation system \eqref{e:whitham}, in the next section we will prove that
\[
{\bf D}(\phi)=\widehat{D}_0-cI
\]
so that, in particular, a necessary condition for the spectral stability of $\phi$ is that the matrix ${\bf D}(\phi)$ is weakly hyperbolic, i.e. that all of its eigenvalues
are real, thus establishing Theorem \ref{T:main}.

%%%%%%%%%%%%%%%%%%%%%%%%%%%%%%%%%%%%%%%%%%%%%%%%%
%%%%%%%%%%%%%%%%%%%%%%%%%%%%%%%%%%%%%%%%%%%%%%%%%
\section{Proof of Theorem \ref{T:main}}\label{S:comparison}
%%%%%%%%%%%%%%%%%%%%%%%%%%%%%%%%%%%%%%%%%%%%%%%%%
%%%%%%%%%%%%%%%%%%%%%%%%%%%%%%%%%%%%%%%%%%%%%%%%%

In this section, we establish Theorem \ref{T:main}.  We will use a direct, row-by-row calculation to show that
\[
{\bf D}(\phi)=\widehat{D}_0-cI,
\]
where here ${\bf D}(\phi)$ is the linearized matrix associated to the Whitham modulation equations \eqref{e:whitham_quasilinear} and the eigenvalues of $\widehat{D}_0$, defined in \eqref{D0},
rigorously describe the structure of the $L^2(\RM)$-spectrum of the linearized operator $A[\phi]$ in a sufficiently small neighborhood of the origin: see Theorem \ref{T:rigorous_MI}.
To this end, first observe that the first rows of each of these matrices are clearly identical.   To compare the second rows, it is enough to establish the identities
\begin{equation}\label{e:2nd_row}
\left\{\begin{aligned}
&\LA 1, A^{(2)} \phi ' + A^{(1)} \phi_k\RA= \LA 1, 2k^2 c (\phi ')^2 - \phi^2 \RA_k,\\
&\LA 1, A^{(1)} \phi_M\RA-c= \LA 1, 2k^2 c (\phi ')^2 - \phi^2 \RA_M,\\
&\LA 1, A^{(1)} \phi_Q\RA= \LA 1, 2k^2 c (\phi ')^2 - \phi^2 \RA_Q.
\end{aligned}\right.
\end{equation}
For the first equality, observe from \eqref{e:A_expand} that, after some manipulations,
\begin{equation}\label{e:complicated1}
A^{(2)} \phi ' + A^{(1)} \phi_k = \Ginv\left( kc (\phi ')^2 - 3kc \phi \phi ''-2k^2 c_k \phi \phi '' + 2k^2 c\d_\th(\phi ' \phi_k - \phi \phi_k ') + \L \phi_k\right)
\end{equation}
which, recalling that $G^{-\dag}[\phi](1)=1$ and using integration by parts, yields
\begin{align*}
\LA 1, A^{(2)} \phi ' + A^{(1)} \phi_k \RA &= %\LA 1, \Ginv\left(-2k^2 c_k \phi \phi '' + 2k^2 c\d_\th(\phi ' \phi_k - \phi \phi_k ') + \L \phi_k + kc (\phi ')^2 - 3kc \phi \phi ''\right)\RA\\
 \LA 1, 4kc (\phi ')^2 + 2k^2 c_k (\phi ')^2 + \L \phi_k \RA.
\end{align*}
Since integration by parts implies
\begin{align*}
\LA 1, \L \phi_k \RA &= \LA 1, c\phi_k - 2\phi \phi_k - k^2 c \phi '' \phi_k + 2k^2 c \phi ' \phi_k ' - k^2 c \phi \phi_k ''\RA\\
&= c\LA 1, \phi_k \RA + \LA 1, -(\phi^2)_k + 2k^2 c ((\phi ')^2)_k\RA
\end{align*}
it follows that
\[
\LA 1, A^{(2)}\phi ' + A^{(1)} \phi_k\RA = c\LA 1, \phi_k \RA + \LA 1, 2k^2 c (\phi ')^2 - \phi^2\RA_k,
\]
which, recalling $\LA 1, \phi_k\RA = 0$ by \eqref{e:phi_k_identity2}(i), establishes \eqref{e:2nd_row}(i).  The other two equalities in \eqref{e:2nd_row} follow in a similar way.  
Indeed, using analogous manipulations as above, as well as  integration by parts, we find 
\[
\LA 1, A^{(1)} \phi_M \RA  = %\LA 1, 2k^2 c_M (\phi ')^2 + 2k^2 c ((\phi ')^2)_M -(\phi^2)_M\RA = 
 c\LA 1, \phi_M \RA+\LA 1, 2k^2 c (\phi ')^2 - \phi^2 \RA_M
\]
and
\[
\LA 1, A^{(1)} \phi_Q \RA%= \LA 1, 2k^2 c_Q (\phi ')^2 + 2k^2 c ((\phi ')^2)_Q -(\phi^2)_Q\RA = 
 =c\LA 1, \phi_Q \RA +\LA 1, 2k^2 c (\phi ')^2 - \phi^2 \RA_Q.
\]
Since $\LA 1, \phi_M\RA = 1$ and  $\LA 1, \phi_Q\RA = 0$ by the biorthogonality relations 
in Corollary \eqref{C:gker_rescaled}, the identities \eqref{e:2nd_row}(ii)-(iii)  follow.  This establishes that the second rows of the matrices $\bf{D}(\phi)$ and
$\widehat{D}_0-cI$ are identical.

To compare the third rows, we aim to establish the following three identities: 
\begin{equation}\label{e:3rd_row}
\left\{\begin{aligned}
&\LA G^\dag[\phi]\phi^{-2}, A^{(2)} \phi' + A^{(1)}\phi_k + kc_k\phi_k\RA=-\LA 1, 2\ln|\phi|\RA_k,\\
&\LA G^\dag[\phi]\phi^{-2}, A^{(1)} \phi_M + kc_M\phi_k\RA=-\LA 1, 2\ln|\phi|\RA_M\\
&\LA  \Gadj \phi^{-2}, A^{(1)} \phi_Q + kc_Q\phi_k\RA +c= -\LA 1, 2\ln|\phi|\RA_Q
\end{aligned}\right.
\end{equation}
Focusing on the first term, we note that \eqref{e:complicated1} and integration by parts implies
\begin{align*}
\LA  \Gadj \phi^{-2}, A^{(2)} \phi' + A^{(1)}\phi_k + kc_k\phi_k\RA &= \LA  \phi^{-2}, - 2k(c + k c_k)(\phi ')^2 + 2k^2 c(\phi '' \phi_k - \phi \phi_k '') + \L \phi_k\RA\\
&+ \LA \Gadj \phi^{-2}, kc_k\phi_k\RA
\end{align*}
so that, by \eqref{e:phi_k_identity2}(ii),
\[
\LA  \Gadj \phi^{-2}, A^{(2)} \phi' + A^{(1)}\phi_k + kc_k\phi_k\RA = - c \LA \Gadj\phi^{-2}, \phi_k\RA + \LA \phi^{-2}, 2k^2 c(\phi '' \phi_k - \phi \phi_k '') + \L \phi_k\RA.
\]
Now, using the fact that $\LA \phi^{-2}, \phi \phi_k ''\RA = \LA \phi^{-2}, \phi ' \phi_k '\RA$ we see
\begin{align*}
\LA \phi^{-2}, \L\phi_k\RA &= -\LA  \phi^{-2}, 2\phi \phi_k\RA+\LA\phi^{-2}, c\phi_k - k^2 c \phi '' \phi_k + 2k^2 c \phi ' \phi_k ' - k^2 c \phi \phi_k ''\RA\\
&=-2\left<1,\ln|\phi|\right>_k+\left<\phi^{-2},c\phi_k-k^2c\left(\phi''\phi_k-\phi\phi_k''\right)\right>
\end{align*}
and hence
\begin{align*}
\LA  \Gadj \phi^{-2}, A^{(2)} \phi' + A^{(1)}\phi_k + kc_k\phi_k\RA = &- c \LA \Gadj\phi^{-2}, \phi_k\RA - \left<1,2\ln|\phi|\right>_k\\
&+\left<\phi^{-2},c\phi_k+k^2c\left(\phi''\phi_k-\phi\phi_k''\right)\right>\\
= &- c \LA \Gadj\phi^{-2}, \phi_k\RA-\left<1,2\ln|\phi|\right>_k+c\left<\phi^{-2},G[\phi]\phi_k\right>.
\end{align*}
The identity \eqref{e:3rd_row}(i) has now been established.  The other two equalities follow similarly.  Indeed, using analogous manipulations 
as above, as well as integration by parts, we find that
\[
\LA G^\dag[\phi]\phi^{-2}, A^{(1)} \phi_M + kc_M\phi_k\right>=-\LA 1, 2\ln|\phi|\RA_M + c\left<G^\dag[\phi]\phi^{-2},\phi_M\right>
\]
and
\[
\LA G^\dag[\phi]\phi^{-2}, A^{(1)} \phi_Q + kc_Q\phi_k\right>=-\LA 1, 2\ln|\phi|\RA_Q + c\left<G^\dag[\phi]\phi^{-2},\phi_Q\right>.
\]
Recalling that $\left<G^\dag[\phi]\phi^{-2},\phi_M\right>=0$ and $\left<G^\dag[\phi]\phi^{-2},\phi_Q\right>=-1$ by the biorthogonality relations 
in Corollary \eqref{C:gker_rescaled}, the identities
\eqref{e:phi_k_identity2}(ii)-(iii) follow.  This complete the proof of Theorem \ref{T:main}

\section{Analysis for Small Amplitude Waves}\label{S:small}

In general, determining whether the Whitham system \eqref{e:whitham} is weakly hyperbolic at a given periodic traveling solution of \eqref{e:conduit1} is a difficult matter.  
See \cite{BrHJ16,BrJK11}, for example, for cases where this information can be computed in the presence of a Hamiltonian structure.
Nevertheless, one can use well-conditioned numerical methods to approximate the entries of the matrix ${\bf D}(\phi)$ in \eqref{e:whitham_quasilinear}, thereby 
producing a numerical stability diagram.  Such numerical analysis was recently carried out in \cite{MH16}.  There, the authors findings indicate, among other things, that
the Whitham modulation system \eqref{e:whitham} is hyperbolic about a given periodic traveling wave $\phi$ provided $\phi$ has sufficiently large period, while the system
is elliptic for sufficiently small periods.  Formally then, the authors findings suggest long waves are modulationally stable while short waves are modulationally unstable.  
For asymptotically small waves, however, it is possible to use asymptotic analysis to analyze the hyperbolicity of the Whitham modulation system \eqref{e:whitham}.
While this analysis is discussed in \cite{MH16}, for completeness we reproduce their result for small amplitude waves.

To this end, we note that $T=1/k$-periodic traveling wave solutions with speed $c$ of the conduit equation \eqref{e:conduit1} correspond to $1$-periodic stationary solutions
of the profile equation
\begin{equation}\label{e:profile_diff}
-\omega \phi ' + 2k \phi \phi ' + k^2 \omega \phi \phi ''' - k^2\omega \phi ' \phi '' = 0,
\end{equation}
where where $\omega=kc$ is the frequency and primes denote differentiation with respect to the traveling variable $\theta=kx-\omega t$.  
Using an elementary Lyapunov-Schmidt argument, one can show that solution pairs $(\phi,\omega)$ of \eqref{e:profile_diff} with asymptotically small oscillations about its
mean\footnote{Note that, here, the mean $M$ agrees with the mass, as defined in \eqref{e:mass}, since we are working with $1$-periodic functions.  More generally,
these quantities would differ by factor of $k$.}  $M$ admit a convergent asymptotic expansion in $H^3_{\rm per}(0,1)$ of the form
\begin{align*}
\phi(\theta;k,M,A) &= M + A \cos(2\pi\theta) + \sum_{j=2}^\infty A^j \phi_j(\theta;k,M)\\
\o(k,M,A) &= \o_0(k,M) + A^2 \o_2(k,M) + \mathcal{O}(A^4) 
\end{align*}
valid for $|A|\ll 1$, where the functions $\phi_j$ are $1$-periodic and satisfy 
\[
\int_0^1\phi_j(\theta)d\theta=0=\int_0^1\phi_j(\theta)\cos(2\pi\theta)d\theta
\]
for all $j\geq 2$.
Further, it is an easy calculation to see that 
\[
\phi_2(\theta) = \frac{1}{6(2\pi)^2k\o_0 M}\cos(4\pi\theta) = \frac{(2\pi)^2 k^2 M+1}{12(2\pi)^2 k^2 M^2}\cos(4\pi\theta)
\]
and
\[
\o_0 = \frac{2kM}{(2\pi)^2 k^2 M+1},\quad\o_2 = \frac{1 - 8(2\pi)^2 k^2 M}{12(2\pi)^2 k M^2((2\pi)^2 k^2 M+1)}.
\]
Using these asymptotic expansions, the Whitham modulation system \eqref{e:whitham} about these waves expands for $|A|\ll 1$ as 
\[
\left\{\begin{aligned}
&k_S+\partial_X\left(\omega_0+A^2\omega_2\right)=\mathcal{O}(A^3)\\
&M_S+\partial_X\left(M^2-\tfrac{1}{2}A^2\left(2(2\pi)^2k\omega_0-1\right)\right)=\mathcal{O}(A^3)\\
&\left(1+\mathcal{O}(A^2)\right) \left(A^2\right)_S + A^2\frac{\partial^2\omega_0}{\partial k^2}k_X + \left(A^2\right)_X\frac{\partial\omega_0}{\partial k}+4A^2\left(1-\frac{2(2\pi)^2k^2M}{(2(2\pi)^2k^2M+1)^3}\right)M_X=\mathcal{O}(A^3).
\end{aligned}
\right.
\]
Using the chain rule, this system can be rewritten in the quasilinear form
\[
\left(\begin{array}{c}k\\M\\A\end{array}\right)_S+B(k,M,A)\left(\begin{array}{c}k\\M\\A\end{array}\right)_X=0,
\]
where here $B$ expands in $A$ as 
\[
B(k,M,A)=B_0(k,M)+AB_1(k,M)+A^2\widetilde{B}(k,M,A),
\]
with $\widetilde{B}(k,M,A)$ a bounded, continuous matrix-valued function and
\[
B_0=\left(\begin{array}{ccc}
					\frac{\partial\omega_0}{\partial k} & \frac{\partial\omega_0}{\partial M} & 0\\
					0 & 2M & 0\\
					0 & 0 & \frac{\partial\omega_0}{\partial k}
					\end{array}\right),
~~
B_1=\left(\begin{array}{ccc}
					0 & 0 & 2\omega_2\\
					0 & 0 & 1-2(2\pi)^2k\omega_0\\
					\frac{1}{2}\frac{\partial^2\omega_0}{\partial k^2} & 2\left(1-\frac{2(2\pi)^2k^2M}{(2(2\pi)^2k^2M+1)^3}\right) & 0
					\end{array}\right).
\]

Based on the above asymptotics, the eigenvalues of the Whitham system \eqref{e:whitham} about the small amplitude $1$-periodic traveling wave $\phi(\cdot;k,M,A)$ may be asymptotically
expanded as
\[
\left\{\begin{aligned}
&\lambda_1(k,M,A)=2M+\mathcal{O}(A^2)\\
&\lambda_{\pm}(k,M,A)=\frac{\partial\omega_0}{\partial k}\pm A\sqrt{-n(k,M)\frac{\partial^2\omega_0}{\partial k^2}}+\mathcal{O}(A^2),
\end{aligned}\right.
\]
where we have explicitly
\[
n(k,M)=\frac{8[(2\pi)^2k^2M]^2+5[(2\pi)^2k^2M]+3}{12(2\pi)^2kM^2([(2\pi)^2k^2M]+1)([(2\pi)^2k^2M]+3)}.
\]
Since 
\[
\frac{\partial^2\omega_0}{\partial k^2}(k,M)=\frac{16\pi^2 kM^2\left((2\pi k)^2M-3\right)}{((2\pi k))^2M+1)^3},
\]
and since $n(k,M)$ is clearly a strictly positive function of $k$ and $M$, we immediately have the following result.

\begin{theorem}\label{T:small}
Let $\phi(\cdot;k,M,A)$ be a $1$-periodic traveling wave solution of \eqref{e:profile_diff} with asymptotically small amplitude.
Then $\phi$ is modulationally unstable if 
\[
(2\pi k)^2>\frac{3}{M}.
\]
Further, a necessary condition for $\phi$ to be modulationally stable is
\[
0<(2\pi k)^2<\frac{3}{M}.
\]
\end{theorem}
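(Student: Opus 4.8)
The plan is to read the stability dichotomy directly off the small-amplitude eigenvalue expansions already derived for the Whitham matrix $B(k,M,A)$, combined with the rigorous criterion furnished by Theorem \ref{T:main}. Recall that Theorem \ref{T:main}, through Theorem \ref{T:rigorous_MI}, identifies the characteristic speeds of the Whitham system at $\phi$ with the leading-order slopes $\mu_j(0)$ of the three spectral curves $\lambda_j(\xi)=ik\xi\mu_j(0)+o(|\xi|)$ of $A[\phi]$ emanating from the origin. Since these speeds are, up to the harmless overall sign relating $B$ to ${\bf D}(\phi)$, exactly the eigenvalues $\lambda_1,\lambda_\pm$ of $B$, reality of all three is necessary for spectral stability; conversely, any such speed with nonzero imaginary part drives the corresponding curve into the open right half-plane for one sign of $\xi$, yielding modulational instability. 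The entire problem thus reduces to deciding, for $0<|A|\ll1$, which of $\lambda_1,\lambda_\pm$ are real.

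First I would dispose of $\lambda_1$. Its expansion $\lambda_1=2M+\mathcal{O}(A^2)$ shows it bifurcates from the eigenvalue $2M$ of $B_0$, which is simple and isolated (a direct computation gives $2M\neq\partial\omega_0/\partial k$ for all $k,M>0$, so $2M$ is not the double eigenvalue), and hence stays real for all sufficiently small $A$ because $B$ is real-valued and a simple real eigenvalue of a real matrix cannot leave the real axis under small real perturbation. The decisive pair is $\lambda_\pm$, which splits from the semisimple double eigenvalue $\partial\omega_0/\partial k$ of $B_0$ and was found to expand as $\lambda_\pm=\partial\omega_0/\partial k\pm A\sqrt{-n(k,M)\,\partial^2\omega_0/\partial k^2}+\mathcal{O}(A^2)$. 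For $0<|A|\ll1$ these eigenvalues have nonzero imaginary part exactly when the radicand is strictly negative, and since $n(k,M)>0$ this happens if and only if $\partial^2\omega_0/\partial k^2>0$.

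The final step is a pure sign computation from the closed forms already recorded. Because $\partial^2\omega_0/\partial k^2=16\pi^2 kM^2\big((2\pi k)^2M-3\big)/\big((2\pi k)^2M+1\big)^3$ and $k,M>0$, the sign of $\partial^2\omega_0/\partial k^2$ coincides with that of $(2\pi k)^2M-3$; hence $\partial^2\omega_0/\partial k^2>0$ precisely when $(2\pi k)^2>3/M$. Assembling the pieces proves the first assertion: when $(2\pi k)^2>3/M$ the pair $\lambda_\pm$ is complex for every small nonzero $A$, the Whitham system fails to be weakly hyperbolic, and Theorem \ref{T:main} gives modulational instability. The necessary condition for stability then follows by contraposition: weak hyperbolicity forces the radicand to be nonnegative, whence $\partial^2\omega_0/\partial k^2\le0$ and $(2\pi k)^2\le3/M$, which together with the automatic bound $(2\pi k)^2>0$ delivers the stated range.

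I expect the only genuinely delicate point to be the transition curve $(2\pi k)^2=3/M$, where $\partial^2\omega_0/\partial k^2$ vanishes, the $\mathcal{O}(A)$ splitting of $\lambda_\pm$ collapses, and the leading-order expansion can no longer decide the reality of $\lambda_\pm$; resolving it cleanly would require pushing the perturbation expansion to $\mathcal{O}(A^2)$, and I would instead simply record that this marginal curve is excluded from the open stability region, consistent with the strict inequalities in the statement. Before invoking Theorem \ref{T:main} I would also confirm that its standing nondegeneracy hypotheses --- the nonvanishing of $T_a$, $\{T,M\}_{a,E}$, and $\{T,M,Q\}_{a,E,c}$ --- hold along the small-amplitude branch, which can be checked directly from the asymptotic expansions of $\phi$ and $\omega$.
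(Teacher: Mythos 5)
Your proposal is correct and follows essentially the same route as the paper: read the reality of the Whitham characteristic speeds off the small-amplitude expansions $\lambda_1=2M+\mathcal{O}(A^2)$ and $\lambda_\pm=\frac{\partial\omega_0}{\partial k}\pm A\sqrt{-n(k,M)\frac{\partial^2\omega_0}{\partial k^2}}+\mathcal{O}(A^2)$, use the strict positivity of $n(k,M)$ to reduce everything to the sign of $\frac{\partial^2\omega_0}{\partial k^2}$, whose explicit formula shows it is positive exactly when $(2\pi k)^2>3/M$, and then invoke Theorem \ref{T:main} for the instability conclusion and its contrapositive for the necessary stability condition. Your additional care points (the simple-eigenvalue argument keeping $\lambda_1$ real, the excluded marginal curve $(2\pi k)^2=3/M$, and verifying the nondegeneracy hypotheses of Theorem \ref{T:main} along the small-amplitude branch) go slightly beyond what the paper records explicitly but do not alter the argument.
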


Theorem \ref{T:small} makes rigorous the formal calcluations in \cite{MH16} regarding small amplitude periodic traveling wave solutions\footnote{In \cite{MH16}, however,
note that $k$ was the wave number relative to $2\pi$-perturbations, while in our work $k$ denotes the wave number relative to $1$-periodic perturbations.  This accounts
for the extra factor of $2\pi$ present in our result compared to that in \cite{MH16}.} of \eqref{e:conduit1}.
Note that while the above analysis shows the Whitham system \eqref{e:whitham} is (strictly) hyperbolic when $0<(2\pi k)^2<3/M$, this is not sufficient
to conclude modulational stability of the underlying wave $\phi$ since hyperbolicity of \eqref{e:whitham} only guarantees the eigenvalues of $\mathcal{A}_\xi[\phi]$
lie on the imaginary axis to first order in $\xi$, i.e. it guarantees tangency of the spectral curves at $\lambda=0$ to the imaginary axis.  Of course, modulational
stability requires that the  spectral curves near the origin are confined to the left half plane, and hence cannot be concluded\footnote{An exception to this rule
occurs when the PDE admits a Hamiltonian strucutre.  See, for example, related work on the KdV equation \cite{BrJ10,BrHJ16}.} from only first order information.
Modulational stability was concluded in \cite{MH16} for the conduit equation, however, in the case $0<(2\pi k)^2<3/M$ through numerical time evolution.  
It would be interesting to rigorously verify this prediction.

\begin{remark}
Note that a slightly different approach to proving Theorem \ref{T:main} would have been to expand the matrix $D(\phi)$ in \eqref{e:whitham_quasilinear}, and using the chain
rule to express derivatives with respect to $Q$ in terms of derivatives with respect to $(k,M,A)$.  This calculation of course produces the same
result.  However, here we preferred to start working directly with the variables $(k,M,A)$ in the Whitham system, since this is the natural parameterization 
in the asymptoically small amplitude limit.
\end{remark}

%%%%%%%%%%%%%%%%%%%%%%%%%%%%%%%%%%%%%%%%%%%%%%%%%
%%%%%%%%%%%%%%%%%%%%%%%%%%%%%%%%%%%%%%%%%%%%%%%%%
\appendix

\bibliographystyle{plain}
\bibliography{Conduit}

\end{document}